\documentclass[12pt, british]{article}
\usepackage{color}
\usepackage{babel}
\usepackage{amsmath,amsthm}
\usepackage{amsfonts}
\usepackage{amssymb}
\usepackage{graphicx}
\usepackage{pdfpages}
\usepackage[utf8]{inputenc}
\usepackage{multicol}
\usepackage[all]{xy}
\usepackage{enumerate}
\usepackage{wrapfig}
\usepackage{geometry}
\usepackage{array}
\geometry{verbose,a4paper,tmargin=20mm,bmargin=20mm,lmargin=15mm,rmargin=15mm}
%

\newtheorem{theorem}{Theorem}[section]

\newtheorem{corollary}[theorem]{Corollary}

\newtheorem{lemma}[theorem]{Lemma}
\newtheorem{proposition}[theorem]{Proposition}

\theoremstyle{definition}

\newtheorem{definition}[theorem]{Definition}

\newtheorem{remark}[theorem]{Remark}

\newcommand{\z}{\mathbb{Z}}
\newcommand{\n}{\mathbb{N}}

\usepackage[
bookmarks=true,
breaklinks=true,
bookmarksnumbered = true,
colorlinks= true,
urlcolor= green,
anchorcolor = yellow,
citecolor=blue,
]{hyperref}                   

\pagestyle{headings}

\begin{document}

\title{A Nielsen type periodic number for maps over $B$}
\author{WESLEM LIBERATO SILVA
~\footnote{Departamento de Ciências Exatas, Universidade Estadual de Santa Cruz, Rodovia Jorge Amado, Km 16, Bairro Salobrinho, CEP 45662-900, Ilhéus-BA, Brazil.
e-mail: \texttt{wlsilva@uesc.br}}
\and
RAFAEL MOREIRA DE SOUZA
~\footnote{Universidade Estadual de Mato Grosso do Sul, 
Cidade Universit\'aria de Dourados - Caixa postal 351 - CEP: 79804-970, Dourados-MS, Brazil.
e-mail: \texttt{moreira@uems.br}}
}
%
\maketitle


\begin{abstract}

Let $Y \to E \stackrel{p}{\to} B$ be a fibration and let $f: E \to E$ be a fiber map over $B$. In this work, we study the geometric and algebraic Reidemeister classes of the iterates of $f$ and introduce a  Nielsen-type periodic number over $B$, denoted by $N_B P_n(f)$. When $B$ is a point, then $N_B P_n(f)$ coincides with the classical Nielsen periodic number.

%
%
%
%
\footnotetext{Key words: Periodic points, fibration, minimal periods}
%
\end{abstract}

\maketitle

\noindent


\section{Introduction}


Let $f: X \to X$ be a self-map of a topological space and $n$ a positive integer. A point $x \in X$ is called a {\it $n$-periodic point} of $f$ if $x \in Fix(f^{n}),$ where $Fix(f^{n}) = \{x \in X| f^{n}(x) = x \}$. The study of periodic points plays an important role in dynamics, as it primarily focuses on the behavior of the orbits of a map $f,$ that is, sets of the form: $\{f^{n}(x)| 1 \leq n < \infty \}.$ A natural generalization involves studying periodic points for fiber-preserving maps motivated by the study of extensions of dynamical systems.
A dynamical system $(M, D)$ is called an {\it extension} of a base dynamical system $(B, d)$ if there exists a continuous surjective map $p: E \to B,$ called projection map, such that $p \circ D = d \circ p.$ The system $(B, d)$ is called a factor of $(E, D).$ For more details on this definition, see \cite{K-S-T-14}. We denote
$$P_{n}(f) = \{x \in X |f^{n}(x) = x \,\, \textrm{but} \,\, f^{k}(x) \neq x \,\, \textrm{for any} \,\,  k < n \}. $$

In \cite{H-P-Y} was defined a Nielsen type periodic number, denoted by $NP_n(f),$ for a map $f: X \to X$ on a compact, connected and ANR space. This number is a homotopic invariant and satisfies $NP_n(f) \leq min\{\#P_n(g)|  g \sim f \}.$ 

Let $Y \to E \stackrel{p}{\to} B$ be a fibration and $f: E \to E$ be a fiber map over $B,$ that is, $p \circ f = p.$ All spaces here are assumed to be compact, without boundary, path-connected manifolds. Under these hypothesis, a Nielsen theory for $f$ was developed in \cite{G-K-09} using bordism techniques. In this paper, we used this theory to introduce a Nielsen type periodic number for a map $f$ over $B$, denoted by $N_BP_n(f)$. This number is a homotopy invariant over $B$ and satisfies $$N_B P_n(f) \leq  min\{\# \pi_0(P_n(g))|  g \sim_B f \},$$ where the symbol $\sim_B$ means a homotopy over $B.$  

By the definition of the Reidemeister classes over $B$, it is possible to see that $N_BP_n(f)$ coincides with $NP_n(f)$ when $B$ is a point. Even in the case of trivial fibrations, computing $N_BP_n(f)$ is not easy. Motivated by the classical case, we define when $f$ is a $n$-toral map over $B$ and present a formula for $N_BP_n(f),$ as stated in Theorem \ref{th-nbpn}.

Informations about $N_BP_n(f)$ implies directly in the computation of the minimal periods of maps over $B.$ In fact, we consider $Per(f) = \{n \in \mathbb{N}| P_n(f) \neq \emptyset \}$ that is not an invariant by homotopy over $B$ and $H_{B}Per(f) = \displaystyle\bigcap_{g \simeq_{B} f} Per(g)$ is invariant over $B.$  Thus, if $N_BP_n(f) \neq 0$, then $n$ belongs to $H_{B}Per(f).$ The set $H_{B}Per(f)$ has been explored by many authors in the case where $B$ is point, see, for example, \cite{A-B-L-S-S-95}, \cite{J-L-98} and \cite{K-K-Z}. We intent to compute $H_{B}Per(f)$ in future work using $N_BP_n(f)$ and other techniques, as in general $N_BP_n(f)$ is difficult to compute.

Besides the introduction, this paper is organized into four sections. In Section 2, we study the Nielsen and Reidemeister classes for a map $f$ over $B.$ In Section 3, we study the algebraic and geometric periodic classes over $B$ and define the depth and the length of a periodic class. In Section 4, we define the Nielsen type periodic number $N_BP_n(f).$ In Section 5, we focus on computing $N_BP_n(f)$ for some fibrations.

\section{Nielsen and Reidemeister classes over $B$}

Let $Y \to E \stackrel{p}{\to} B$ be a fibration and $f: E \to E$ a fiber map over $B,$ where all spaces are compact, without boundary, path-connected manifolds. Motivated by \cite{G-K-09}, in this section, we study the Nielsen and Reidemeister classes of $f$ over $B.$ There is a subtle difference between the definition given in \cite{G-K-09} and ours: in \cite{G-K-09}, its is assumed that $f$ has a fixed point, whereas we do not make this assumption.

\begin{definition}[Algebraic Reidemeister classes over $B$] \label{def-rb}

Let $x_0 $ be a point in $E$ and $\omega$ a path from $x_0$ to $f(x_0)$ such that $p(\omega(t)) = p(x_0)$ for all $t \in I.$  We consider $Y \subset E$ the fiber over the point $b_0 = p(x_0).$  Two elements $[\theta]$ and $[\theta^{'}]$ in $\pi_1(Y, x_0)$ are Reidemeister related over $B$ if and only if there exists $[c] \in \pi_1(E, x_0)$ such that 
$$[c] \ast_B [\theta] = [\theta^{'}].$$ The action $\ast_B$ is defined as follows. Since $p$ is a fibration then from \cite[I.7.16]{Whi} we can lift the homotopy $H: I \times I \to B$ defined by $H(t,s) = p(c(t))$ to a homotopy $\widetilde{H}: I \times I \to E$ such that
$$\widetilde{H}(t,0) = c(t), \,\,\,\, \widetilde{H}(0,s) = \theta(s) \ast \omega(s),  \,\,\,\,  \widetilde{H}(t,1) = f(c(t)) $$  
for all $t,s \in I.$ We define

\begin{equation} \label{actionastB}
	[c] \ast_B [\theta] = [\widetilde{H}(1,s) \ast \omega^{-1}] \in \pi_1(Y, x_0).
\end{equation}

The action is represented in the picture below, where each edge label represents the image of that edge of $I \times I$ by the homotopy $\widetilde{H}$ in $E.$
\begin{center}
\setlength{\unitlength}{0,8cm}
\begin{picture}(6.7,3.3)
\thicklines
\put(1.5,0.5){\line(1,0){3}}
\put(1.5,2.5){\line(1,0){3}}
\put(1.5,0.5){\line(0,1){2}}
\put(4.5,0.5){\line(0,1){2}}
\put(2.9,0){$c$}
\put(2.5,2.8){$f(c)$}
\put(0.1,1.4){$\theta \ast \omega$}
\put(4.8,1.4){$\theta^{'} \ast \omega$}
\end{picture}
\end{center}

The algebraic Reidemeister classes over $B,$ denoted by $\mathcal{R}_{B}(f; x_0, \omega),$ is the set of classes given by the equivalence relation  $\ast_B.$ 
\end{definition}

Due to the specific form of $H$ and the fact that $f$ is over $B$, we have that $\widetilde{H}(1,s) \subset Y,$ where $Y = p^{-1}(p(x_0)).$ Since $\omega$ is a path in $Y$ then $\widetilde{H}(1,s) \ast \omega^{-1} \in \pi_1(Y, x_0).$ By \cite[I.7.18]{Whi}, the action $[c] \ast_{B} [\theta]$ depends only of the homotopy classes of $c,$  $\theta$ and the path $\omega$.

%
%
%
%
%
%

\begin{remark}
For simplicity, from now on, we will use diagrams like above to represent the lift $\widetilde{H}$ of a homotopy $H,$ where each edge label represents the image of that edge by $\widetilde{H}.$
\end{remark}

It follows from boundary conditions that $\theta^{'} \ast \omega$ is homotopic to $c^{-1} \ast \theta \ast \omega \ast f(c)$ in $E.$ Thus, in the special  case where $B$ is a point, we have $[\theta^{'}] = [c^{-1} \ast \theta \ast (\omega \ast f(c) \ast \omega^{-1})]$ and therefore the action $\ast_{B}$ coincides with that given in \cite{H-P-Y}.

\begin{lemma} \label{bij-x0-x1}
Let $x_0$ and $ x_1$ be points in $E$ connected by a path $u$ in $E$ from $x_0$ to $x_1$. We take  $\omega_0$ and $\omega_1$ paths in the fiber $Y_0 = p^{-1}(p(x_0))$ from $x_0$ to $f(x_0)$ and in the fiber $Y_1 = p^{-1}(p(x_1))$ from $x_1$ to $f(x_1)$, respectively. Then exists a map $\kappa:\mathcal{R}_{B}(f; x_0, \omega_0) \to \mathcal{R}_{B}(f; x_1, \omega_1)$ which is a bijection. Furthermore, $\kappa$ does not depend on the choice of the path $u$.      
\end{lemma}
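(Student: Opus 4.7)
My plan is to transport loops from the fiber $Y_0$ to $Y_1$ using the homotopy lifting property of $p$ along the path $u$, and then correct for the choice of $\omega_1$.

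\emph{Construction of $\kappa$.} Given a representative $\theta\in\pi_1(Y_0,x_0)$, form the loop $\gamma:=u^{-1}\ast\theta\ast u$ at $x_1$ in $E$. Since $p\circ\theta\equiv b_0=p(x_0)$, the projection $p\circ\gamma$ is the conjugate $(p\circ u)^{-1}\ast c_{b_0}\ast(p\circ u)$, which is null-homotopic rel endpoints at $b_1:=p(x_1)$. Fix such a null-homotopy $G\colon I\times I\to B$ with $G(0,s)=G(1,s)\equiv b_1$ and $G(t,1)\equiv b_1$, and lift it by the HLP (as in \cite[I.7.16]{Whi}) to $\widetilde G\colon I\times I\to E$ with $\widetilde G(t,0)=\gamma(t)$ and $\widetilde G(0,s)=\widetilde G(1,s)=x_1$. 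Then $\theta':=\widetilde G(\cdot,1)$ is a loop in $Y_1$ based at $x_1$. Applying the same procedure to the path $u^{-1}\ast\omega_0\ast f(u)$ (which lies over a null-homotopic loop since $p\circ f=p$) produces a path $\tilde\omega_1$ from $x_1$ to $f(x_1)$ in $Y_1$. The assignment $[\theta]\mapsto[\theta']$ first defines a map $\mathcal{R}_B(f;x_0,\omega_0)\to\mathcal{R}_B(f;x_1,\tilde\omega_1)$; post-composing with the canonical bijection $\mathcal{R}_B(f;x_1,\tilde\omega_1)\to\mathcal{R}_B(f;x_1,\omega_1)$ induced by the loop $\tilde\omega_1\ast\omega_1^{-1}\in\pi_1(Y_1,x_1)$ yields $\kappa$.

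\emph{Well-definedness, bijectivity, and independence of $u$.} That $[\theta']\in\pi_1(Y_1,x_1)$ is independent of the choices of $G$ and $\widetilde G$ follows from a homotopy-of-homotopies argument using the HLP a second time. The key point is descent to Reidemeister classes: if $[\theta_1]=[c]\ast_B[\theta_0]$ with defining square $\widetilde H$ as in Definition~\ref{def-rb}, I paste $\widetilde H$ to the transport squares built for $\theta_0$, $\theta_1$, and $c$, and read the boundary of the resulting lifted $I\times I$-diagram in $E$; this exhibits $[\theta_1']=[u^{-1}\ast c\ast u]\ast_B[\theta_0']$ in $\pi_1(Y_1,x_1)$, so $\kappa$ respects the $\ast_B$-equivalence. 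Bijectivity follows by symmetry: the analogous construction along $u^{-1}$ furnishes the two-sided inverse. Finally, if $u'$ is a second path from $x_0$ to $x_1$, then $v:=u\ast(u')^{-1}\in\pi_1(E,x_0)$, and comparing the two transport squares exhibits $\theta'_u$ and $\theta'_{u'}$ as related by $[v]\ast_B$ (up to the discrepancy between $\tilde\omega_{1,u}$ and $\tilde\omega_{1,u'}$, absorbed by the correction step), so both represent the same class in $\mathcal{R}_B(f;x_1,\omega_1)$.

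\emph{Main obstacle.} The principal difficulty is the bookkeeping of pasted lifted $I\times I$-diagrams in $E$. Each application of the HLP requires prescribing the lift on three of the four sides of the square, in particular the constancy conditions $\widetilde G(0,s)=\widetilde G(1,s)=x_1$, and the partial lifts to be pasted across successive squares must be checked for compatibility at the shared corners. This uses exactly the stronger form of the HLP already implicit in Definition~\ref{def-rb}, available for the Hurewicz fibration $p$ under the manifold hypotheses, but the diagrammatic compatibility is where most of the verification effort is concentrated.
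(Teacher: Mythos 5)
There is a genuine gap in your construction of $\kappa$, located in the very first step. You replace $\theta$ by the conjugate $\gamma=u^{-1}\ast\theta\ast u$ and then push $\gamma$ into the fiber $Y_1$ by choosing a null-homotopy $G$ of $p\circ\gamma$ rel endpoints and lifting it with constant vertical edges. The resulting loop $\theta'=\widetilde G(\cdot,1)$ is \emph{not} independent of the choice of $G$: two null-homotopies of the same loop in $B$ rel boundary need not be homotopic rel boundary — they differ by an element $\sigma\in\pi_2(B,b_1)$ — and the corresponding transported loops differ in $\pi_1(Y_1,x_1)$ by $\partial(\sigma)$, where $\partial\colon\pi_2(B,b_1)\to\pi_1(Y_1,x_1)$ is the connecting homomorphism of the fibration. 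Your ``homotopy-of-homotopies argument using the HLP a second time'' only handles the choice of the lift $\widetilde G$ for a \emph{fixed} $G$; it cannot remove the $\pi_2(B)$-ambiguity in $G$ itself. Nor does this ambiguity die in the Reidemeister quotient: the relation $\ast_B$ is a twisted conjugation, not left translation by $\ker\bigl(\pi_1(Y_1)\to\pi_1(E)\bigr)=\mathrm{im}\,\partial$. Concretely, for the Hopf fibration $S^1\to S^3\to S^2$ with $f=\mathrm{id}$ (which satisfies all the paper's hypotheses), $\partial$ is an isomorphism onto $\pi_1(S^1)\cong\mathbb{Z}$, while $\mathcal{R}_B(f;x_1,\omega_1)\cong\pi_1(S^1)\cong\mathbb{Z}$ with all classes distinct; so your $[\theta']$ can be made to equal \emph{any} class by varying $G$, and $\kappa$ is not well defined. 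The same problem infects your construction of $\tilde\omega_1$ and your independence-of-$u$ argument.

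The repair is to avoid conjugating and null-homotoping altogether: transport the \emph{path} $\theta\ast\omega_0$ (from $x_0$ to $f(x_0)$) as a single unit, by lifting the canonical base homotopy $H_1(t,s)=p(u(t))$ with the lift prescribed on three edges as $u$ (bottom), $\theta\ast\omega_0$ (left) and $f(u)$ (top); the compatibility $p\circ f=p$ guarantees these prescriptions all lie over $H_1$, the right edge $\widetilde H_1(1,\cdot)$ automatically lands in $Y_1$, and the relative HLP for the cube shows it is well defined up to homotopy in $Y_1$ rel endpoints — no choice of null-homotopy ever enters. One then sets $\kappa([\theta])=[\widetilde H_1(1,\cdot)\ast\omega_1^{-1}]$. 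This is what the paper does. Your remaining steps — descent to Reidemeister classes by pasting the defining square for $[c]\ast_B$ between two transport squares (yielding the relating loop $u^{-1}\ast c\ast u$), bijectivity via the transport along $u^{-1}$, and independence of $u$ via the pasted square over $u^{-1}\ast v$ — are structurally the same as the paper's and go through once the construction is corrected.
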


\begin{proof} 
Given $[\theta] \in \mathcal{R}_{B}(f; x_0, \omega_0),$
from \cite[I.7.16]{Whi} we can lift the homotopy $H_1: I \times I \to B$ defined by $H_1(t,s) = p(u(t))$ to a homotopy $\widetilde{H}_{1}: I \times I \to E$ such that: $\widetilde{H}_{1}(t,0) = u(t), \ \widetilde{H}_{1}(0,s) = \theta(s)\ast \omega_{0}(s), \ \widetilde{H}_{1}(t,1) = f(u(t)) \ and \ \widetilde{H}_{1}(t,1) \in Y_{1},$
for all $t,s \in I,$ as in diagram below.

\begin{center}
\setlength{\unitlength}{0,8cm}
\begin{picture}(6.7,3.3)
\thicklines
\put(1.5,0.5){\line(1,0){3}}
\put(1.5,2.5){\line(1,0){3}}
\put(1.5,0.5){\line(0,1){2}}
\put(4.5,0.5){\line(0,1){2}}
\put(2.9,0){$u$}
\put(2.5,2.8){$f(u)$}
\put(0.1,1.4){$\theta\ast\omega_{0}$}
\put(4.6,1.4){$\widetilde{H}_{1}(1,s)$}
\end{picture}
\end{center}

We define $\kappa([\theta])=[\widetilde{H}_{1}(1,s)\ast\omega_{1}^{-1}].$
Given another class $[\theta^{'}] \in \mathcal{R}_{B}(f; x_0, \omega_0)$ we have $\kappa([\theta^{'}])=[\widetilde{H}_{2}(1,s)\ast\omega_{1}^{-1}]$ for some lift $\widetilde{H}_2.$ If $[\theta] = [\theta^{'}],$ then there exists $[c] \in \pi_1(E, x_0)$ and a lift $\widetilde{H}: I \times I \to E$ for the homotopy $H: I \times I \to B$ defined by $H(t,s) = p(c(t))$ such that the image of the boundary of $I \times I$ is represented as in the diagram below, by Definition \eqref{def-rb}. 
\begin{center}
\setlength{\unitlength}{0,8cm}
\begin{picture}(6.7,3.3)
\thicklines
\put(1.5,0.5){\line(1,0){3}}
\put(1.5,2.5){\line(1,0){3}}
\put(1.5,0.5){\line(0,1){2}}
\put(4.5,0.5){\line(0,1){2}}
\put(2.9,0){$c$}
\put(2.5,2.8){$f(c)$}
\put(0.1,1.4){$\theta\ast\omega_{0}$}
\put(4.6,1.4){$\theta^{'}\ast\omega_{0}$}
\end{picture}
\end{center}

By the following concatenation of the homotopies; $\widetilde{H}_1(1-t,s) \ast \widetilde{H}(t,s) \ast \widetilde{H}_2(t,s)$ we have that $[\widetilde{H}_1(1,s)] = [\widetilde{H}_2(1,s)],$ as in the diagram below.
\begin{center}
\setlength{\unitlength}{0,8cm}
\begin{picture}(13,3.3)
\thicklines
\put(2,0.5){\line(1,0){9}}
\put(2,2.5){\line(1,0){9}}
\put(2,0.5){\line(0,1){2}}
\put(5,0.5){\line(0,1){2}}
\put(8,0.5){\line(0,1){2}}
\put(11,0.5){\line(0,1){2}}
\put(3.1,0){$u^{-1}$}
\put(2.7,2.8){$f(u^{-1})$}
\put(6.4,0){$c$}
\put(6.2,2.8){$f(c)$}
\put(9.4,0){$u$}
\put(9.2,2.8){$f(u)$}
\put(0.1,1.4){$\widetilde{H}_{1}(1,s)$}
\put(5.1,1.4){$\theta\ast\omega_{0}$}
\put(8.1,1.4){$\theta^{'}\ast\omega_{0}$}
\put(11.1,1.4){$\widetilde{H}_{2}(1,s)$}
\end{picture}
\end{center}

Therefore $\kappa([\theta]) = [\widetilde{H}_{1}(1,s)\ast\omega_{1}^{-1}]  = [\widetilde{H}_{2}(1,s)\ast\omega_{1}^{-1}] = \kappa([\theta^{'}])$ and thus $\kappa$ is well defined.
To demonstrate that $\kappa$ is a bijection we define $\widetilde{\kappa}:\mathcal{R}_{B}(f; x_1, \omega_1) \to \mathcal{R}_{B}(f; x_0, \omega_0)$ by the same way we defined $\kappa$. Given $[\theta]\in\pi_1(Y_0,x_0)$ we have:
\begin{center}
\setlength{\unitlength}{0,8cm}
\begin{picture}(9.5,3.3)
\thicklines
\put(1.5,0.5){\line(1,0){6}}
\put(1.5,2.5){\line(1,0){6}}
\put(1.5,0.5){\line(0,1){2}}
\put(4.5,0.5){\line(0,1){2}}
\put(7.5,0.5){\line(0,1){2}}
\put(2.8,0){$u$}
\put(2.4,2.8){$f(u)$}
\put(5.7,0){$u^{-1}$}
\put(5.4,2.8){$f(u^{-1})$}
\put(0.1,1.4){$\theta\ast\omega_{0}$}
\put(4.6,1.4){$\widetilde{H}_{1}(1,s)$}
\put(7.6,1.4){$\widetilde{H}_{2}(1,s)$}
\end{picture}
\end{center}
and $[\theta]=[\widetilde{H}_{2}(1,s)\ast\omega_{0}^{-1} ]$, then $\widetilde{\kappa}(\kappa([\theta]))=[\theta]$.
Finally, let $v$ another path in $E$ from $x_0$ to $x_1$, then $\kappa_{v}:\mathcal{R}_{B}(f; x_0, \omega_0) \to \mathcal{R}_{B}(f; x_1, \omega_1)$ is defined in a similar way to $\kappa_{u}$. Given $[\theta]\in\pi_1(Y_0,x_0)$ we have:
\begin{center}
\setlength{\unitlength}{0,8cm}
\begin{picture}(10,3.3)
\thicklines
\put(2,0.5){\line(1,0){6}}
\put(2,2.5){\line(1,0){6}}
\put(2,0.5){\line(0,1){2}}
\put(5,0.5){\line(0,1){2}}
\put(8,0.5){\line(0,1){2}}
\put(3.1,0){$u^{-1}$}
\put(2.7,2.8){$f(u^{-1})$}
\put(6.4,0){$v$}
\put(6.1,2.8){$f(v)$}
\put(0.2,1.4){$\widetilde{H}_{1}(1,s)$}
\put(5.1,1.4){$\theta\ast\omega_{0}$}
\put(8.1,1.4){$\widetilde{H}_{2}(1,s)$}
\end{picture}
\end{center}
and $\kappa_{u}([\theta])=[\widetilde{H}_{1}(1,s)\ast\omega_{1}^{-1}]=[\widetilde{H}_{2}(1,s)\ast\omega_{1}^{-1}]=\kappa_{v}([\theta])$.
\end{proof}

\begin{definition}
The cardinality of $\mathcal{R}_{B}(f; x_0, \omega)$ is the Reidemeister number and it will be denote by $R_B(f).$
\end{definition}

\begin{corollary}	
The Reidemeister number depends only of the homotopy classes of $f$ over $B.$
\end{corollary}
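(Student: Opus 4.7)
The plan is to show that any fiberwise homotopy $F\colon E\times I\to E$ from $f$ to $f'$ (so $p\circ F(\cdot,s)=p$ for every $s$) induces a canonical bijection between $\mathcal{R}_B(f;x_0,\omega)$ and $\mathcal{R}_B(f';x_0,\omega')$ for a suitable path $\omega'$, whence $R_B(f)=R_B(f')$. By Lemma \ref{bij-x0-x1}, we are free to fix the basepoint $x_0$ throughout, and only the ``variation of $\omega$'' caused by $F$ needs to be absorbed.

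First, I would set $\omega'=\omega\ast F(x_0,\cdot)$. Because $F$ is fiberwise, $F(x_0,\cdot)$ is a path in $Y=p^{-1}(p(x_0))$ from $f(x_0)$ to $f'(x_0)$, so $\omega'$ is a legitimate path in $Y$ from $x_0$ to $f'(x_0)$. Then I would define
\[
\Phi\colon \mathcal{R}_B(f;x_0,\omega)\longrightarrow \mathcal{R}_B(f';x_0,\omega'),\qquad [\theta]\longmapsto[\theta],
\]
using the identity on $\pi_1(Y,x_0)$, and symmetrically define $\Phi'$ using the reverse homotopy $F(\cdot,1-s)$ from $f'$ to $f$ together with the path $\omega=\omega'\ast F(x_0,\cdot)^{-1}$ (up to homotopy in $Y$). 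The goal is then to verify that $\Phi$ is well-defined on Reidemeister classes and that $\Phi'\circ\Phi=\mathrm{id}$, so that $\Phi$ is a bijection.

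The key step is the well-definedness: if $[c]\ast_B[\theta]=[\theta^{'}]$ with respect to $(f,\omega)$, via a lift $\widetilde{H}$ of the homotopy $H(t,s)=p(c(t))$ whose boundary reads $(c,\ \theta\ast\omega,\ f(c),\ \theta^{'}\ast\omega)$, I would stack on top of $\widetilde{H}$ the homotopy $(t,s)\mapsto F(c(t),s)$ to obtain a new square $\widetilde{H}'$ whose vertical sides become $\theta\ast\omega\ast F(x_0,\cdot)=\theta\ast\omega'$ and $\theta^{'}\ast\omega\ast F(x_0,\cdot)=\theta^{'}\ast\omega'$ (here I use that $c$ is a loop at $x_0$, so both endpoints of the top edge of $\widetilde{H}$ are joined to $x_0$ by the same path $F(x_0,\cdot)$), while the top edge becomes $f'(c)$ and the bottom edge remains $c$. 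Because $p\circ F(c(t),s)=p(c(t))$, this concatenated square is still a lift of $H$, so it exhibits $[c]\ast_B[\theta]=[\theta^{'}]$ with respect to $(f',\omega')$. This is the only substantive point; the main obstacle is really just bookkeeping the boundary data and the parameter reparametrisations of the concatenation, all of which are harmless in view of \cite[I.7.16, I.7.18]{Whi}.

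Finally, I would conclude: $\Phi'\circ\Phi$ is the identity on $\mathcal{R}_B(f;x_0,\omega)$ because stacking $F$ and then $F^{-1}$ on top of $\widetilde{H}$ is homotopic rel boundary (in $E$, covering the constant homotopy $H$) to $\widetilde{H}$ itself, so it sends $[\theta]$ back to $[\theta]$. Since $\Phi$ is a bijection, $|\mathcal{R}_B(f;x_0,\omega)|=|\mathcal{R}_B(f';x_0,\omega')|$, and Lemma \ref{bij-x0-x1} ensures these cardinalities equal $R_B(f)$ and $R_B(f')$ respectively. Hence $R_B$ is an invariant of the homotopy class of $f$ over $B$.
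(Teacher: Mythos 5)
Your argument is correct. The paper itself leaves this corollary without proof; the closest it comes is in Section 4, where homotopy invariance of the Reidemeister data is obtained by passing through the geometric model: Theorem \ref{th-rb} identifies $\mathcal{R}_B(f;x_0,\omega)$ with $\pi_0(E_B(f))$, and a fiberwise homotopy $H$ from $f$ to $g$ induces (citing \cite{K-06}) a fiber homotopy equivalence $E_B(f)\to E_B(g)$, $(x,\alpha)\mapsto(x,\alpha\ast H(x,\cdot))$, hence a bijection on path components. Your proof is the algebraic shadow of exactly that map: with $\omega'=\omega\ast F(x_0,\cdot)$ the induced map on representatives becomes the identity of $\pi_1(Y,x_0)$, and your square-stacking verification that the relation $\ast_B$ is preserved is the same style of lifted-square concatenation the paper uses to prove Lemma \ref{bij-x0-x1}. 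What your route buys is self-containedness (no appeal to \cite{K-06} or to $\pi_0(E_B(f))$); what the paper's route buys is that the same fiber homotopy equivalence simultaneously preserves the index, hence essentiality of classes, which is needed later for $N_BP_n$ but not for the bare count $R_B(f)$. Two small points worth making explicit: the concatenated square is still a lift of $H(t,s)=p(c(t))$ precisely because $F$ is over $B$, as you note; and the identification of the left and right edges with $\theta\ast\omega'$ and $\theta'\ast\omega'$ holds only up to reparametrisation and homotopy rel endpoints, which suffices by the paper's appeal to \cite[I.7.18]{Whi}. Finally, invoking Lemma \ref{bij-x0-x1} to see that $|\mathcal{R}_B(f';x_0,\omega')|=R_B(f')$ for your particular choice of $\omega'$ is exactly right, since the definition of $R_B$ presupposes that independence.
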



We denote by $E_B(f)$ the following space;
$$E_B(f) = \{(x, \alpha) \in E \times E^{I}| p(\alpha(t)) = p(x), \alpha(0) = x , \alpha(1) = f(x)\},$$
where $E^{I}$ is the space of all continuous paths on $E$ with the compact-open topology. The map $p_1: E_B(f) \to E$ defined by $p_1(x, \alpha) = x$ is a fibration. We denote $p_r = p \circ p_1,$ which is also a fibration.

\begin{theorem} \label{th-rb}
Let $f: E \to E$ be a map over $B.$ Let $x_0 \in E$ and $\omega$ a path from $x_0$ to $f(x_0)$ such that $p(\omega(t)) = p(x_0)$ for all $t \in I.$ There is a canonical bijection 
$$\Gamma: \mathcal{R}_{B}(f; x_0, \omega) \to \pi_0(E_B(f)).$$
\end{theorem}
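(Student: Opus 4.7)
The plan is to define $\Gamma$ on representatives by $\Gamma([\theta]) = [(x_0, \theta \ast \omega)] \in \pi_0(E_B(f))$, where the outer bracket denotes path component. The key dictionary used throughout is the exponential correspondence for the compact--open topology on $E^I$: a path $t \mapsto (c(t), \beta_t)$ in $E_B(f)$ is the same data as a continuous map $\widetilde{H} \colon I \times I \to E$ with $p \circ \widetilde{H}(t,s) = p(c(t))$, $\widetilde{H}(t,0) = c(t)$ and $\widetilde{H}(t,1) = f(c(t))$, i.e.\ precisely a lift of the kind appearing in Definition \ref{def-rb}.

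For well-definedness I would argue in two steps. First, if $\theta$ and $\theta'$ represent the same class in $\pi_1(Y, x_0)$, a rel-endpoints homotopy in $Y$ between $\theta \ast \omega$ and $\theta' \ast \omega$ gives a path $t \mapsto (x_0, \beta_t)$ in $E_B(f)$, so $\Gamma$ is well-defined on $\pi_1(Y, x_0)$. Second, if $[\theta'] = [c] \ast_B [\theta]$ with witnessing lift $\widetilde{H}$, then $t \mapsto (c(t), \widetilde{H}(t, \cdot))$ is a path in $E_B(f)$ from $(x_0, \theta \ast \omega)$ to $(x_0, \widetilde{H}(1, \cdot))$, and since $\widetilde{H}(1, \cdot) \ast \omega^{-1}$ represents $[\theta']$ the first step then connects $(x_0, \widetilde{H}(1, \cdot))$ to $(x_0, \theta' \ast \omega)$.

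Injectivity reverses the picture: given a path $\gamma(t) = (c(t), \beta_t)$ in $E_B(f)$ from $(x_0, \theta \ast \omega)$ to $(x_0, \theta' \ast \omega)$, the associated $\widetilde{H}(t,s) := \beta_t(s)$ is a lift of $H(t,s) = p(c(t))$ satisfying the four boundary conditions required by Definition \ref{def-rb}, so $[\theta'] = [c] \ast_B [\theta]$. For surjectivity, given $(x, \alpha) \in E_B(f)$, pick a path $c$ in $E$ from $x$ to $x_0$ and lift it through the fibration $p_1 \colon E_B(f) \to E$ (noted just before the theorem) to obtain a path in $E_B(f)$ from $(x, \alpha)$ to some $(x_0, \alpha')$; setting $\theta := \alpha' \ast \omega^{-1} \in \pi_1(Y, x_0)$ and using the first step of well-definedness to connect $(x_0, \alpha')$ to $(x_0, \theta \ast \omega)$ exhibits $(x, \alpha)$ in the image of $\Gamma$.

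The main obstacle is essentially bookkeeping: paths in $E_B(f)$ must consistently be viewed via their adjoint squares in $E$, verifying at each translation that the fibrewise condition $p \circ \widetilde{H}(t, \cdot) = p(c(t))$ and the endpoint conditions at $s = 0, 1$ are preserved under concatenation and reparametrisation. Beyond Definition \ref{def-rb} itself, the only topological inputs are the homotopy lifting property of $p$ (used implicitly in the lifts supplied by Definition \ref{def-rb}) and the homotopy lifting property of $p_1$, which is invoked once in the surjectivity argument.
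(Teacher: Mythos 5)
Your proposal is correct and follows essentially the same route as the paper's proof: the same definition of $\Gamma$ via the path component of $(x_0,\theta\ast\omega)$, the same use of the adjoint correspondence between paths in $E_B(f)$ and the lifts $\widetilde{H}$ of Definition \ref{def-rb} for well-definedness and injectivity, and the same lifting of a path from $x$ to $x_0$ through $p_1$ for surjectivity. You are only slightly more explicit than the paper about the exponential-law bookkeeping and about separating well-definedness on $\pi_1(Y,x_0)$ from invariance under the $\ast_B$-action.
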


\begin{proof}
Let $[\theta] \in \pi_1(Y, x_0)$ be a class where $Y = p^{-1}(p(x_0)).$ The pair $(x_0, \theta \ast \omega) \in E_B(f).$ Define 
$$\Gamma([\theta]) = \mathcal{C}_{\theta},$$
where $\mathcal{C_{\theta}}$ denotes the path component of $(x_0,\theta \ast \omega ).$  If $[\theta^{'}] = [c] \ast_B [\theta]$ then the homotopy $\widetilde{H}$ in \eqref{actionastB} determines a path $(c(t), \widetilde{H}(t, -))$ joining $(x_0, \theta \ast \omega)$ to $(x_0, \theta^{'} \ast \omega)$ in $E_B(f),$ and therefore $\Gamma([\theta]) = \Gamma([\theta^{'}]).$ Is not difficult to see that two paths in $E_B(f)$ which starts and end in the fiber $p_1^{-1}(x_0) = (x_0, \Omega(Y, x_0))$ can be joining by some lift $\widetilde{H}$ as in Definition \eqref{def-rb}. Thus $\Gamma$ is injective.  

The map $\Gamma$ is also onto. In fact, given a point $(x, \alpha_1) \in E_B(f),$  we take a path $r$ in $E$ from $x$ to $x_0$ and lift it to a path in $E_B(f)$ which joins $(x, \alpha_1)$ to some point in $(x_0, \alpha) \in  p_1^{-1}(x_0).$ Therefore $\Gamma([\alpha \ast \omega^{-1}]) = \mathcal{C_{\alpha}} = \mathcal{C}_{\alpha_1}.$
\end{proof}

The set of path components of $E_B(f)$ is called by geometric Reidemeister set over $B$ and its cardinality by geometric Reidemeister number over $B.$

\begin{definition}[Nielsen classes over $B$] \label{nielsen-b}
Let $f: E \to E$ be a map over $B.$ Two points $x, y \in Fix(f)$ are called Nielsen equivalent over $B$ if there exist a path $\lambda: I \to E$ with $\lambda(0) = x$ and $\lambda(1) = y$ and a homotopy $H: I \times I \to E$ such that $H(t,0) = \lambda(t),$ $H(t,1) = f\circ \lambda(t),$ $H(0,s) = x,$ $H(1,s) = y$ and for each $t$ the image $H(\{t\} \times I)$ lies in the fiber $p^{-1}(p(\lambda(t))).$   
\end{definition}

Using the equivalence relation above, we can split $Fix(f)$ into disjoint Nielsen classes over $B.$ Each Nielsen class is associated with an index over $B$, as in \cite[Section 5]{G-K-09}. The Nielsen number over $B$ is the number of Nielsen classes with nonzero index. We denote by $\mathcal{N}_B(f)$ the set of Nielsen classes over $B.$

\begin{proposition} \label{prop-fix}

Two fixed points $x_1$ and $x_2$ of $f: E \to E$ are Nielsen equivalent over $B$, if and only if, the map
$$g_B : Fix(f) \to E_B(f)$$
defined by $g_B(x) = (x, \textrm{constant path at } \, f(x)=x)$ takes them into the same path-component $A$ of $E_B(f).$ Therefore the Nielsen classes of $f$ over $B$ are just those inverse images $g_B^{-1}(A),$ $A \in \pi_0(E_B(f))$ which are nonempty.
\end{proposition}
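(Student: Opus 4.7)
The plan is to unwind both definitions and observe that they describe the same data, packaged either as a homotopy $H:I\times I\to E$ or as a continuous path in the auxiliary space $E_B(f)$. The bridge between the two is the standard exponential correspondence $\mathrm{Map}(I,E^I)\cong\mathrm{Map}(I\times I,E)$, which is valid since $I$ is compact Hausdorff; under this correspondence a path $t\mapsto\alpha_t$ in $E^I$ is the same as a map $(t,s)\mapsto\alpha_t(s)$ on $I\times I$.

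First, assume $x_1,x_2$ are Nielsen equivalent over $B$ via $\lambda$ and $H$ as in Definition~\ref{nielsen-b}. I define $\Phi:I\to E\times E^I$ by $\Phi(t)=(\lambda(t),H(t,-))$. Continuity is immediate from the exponential law. I then check that $\Phi(t)$ lies in $E_B(f)$ for every $t$: the fiber condition on $H$ gives $p(H(t,s))=p(\lambda(t))$, while $H(t,0)=\lambda(t)$ and $H(t,1)=f\circ\lambda(t)$ supply the required endpoint conditions. At $t=0$ the boundary condition $H(0,s)=x_1$ means $\Phi(0)=(x_1,\text{const}_{x_1})=g_B(x_1)$, and similarly $\Phi(1)=g_B(x_2)$. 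Hence $g_B(x_1)$ and $g_B(x_2)$ lie in the same path component of $E_B(f)$.

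Conversely, suppose $g_B(x_1)$ and $g_B(x_2)$ lie in the same path component, and let $\Phi:I\to E_B(f)$ be a path with $\Phi(0)=g_B(x_1)$, $\Phi(1)=g_B(x_2)$. Write $\Phi(t)=(\lambda(t),\alpha_t)$ and set $H(t,s)=\alpha_t(s)$. Again the exponential law makes $H$ continuous, and the defining conditions of $E_B(f)$ translate directly into the hypotheses of Definition~\ref{nielsen-b}: $H(t,0)=\alpha_t(0)=\lambda(t)$, $H(t,1)=\alpha_t(1)=f(\lambda(t))$, and $p(H(t,s))=p(\alpha_t(s))=p(\lambda(t))$ so $H(\{t\}\times I)\subset p^{-1}(p(\lambda(t)))$. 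The endpoint conditions $\Phi(0)=g_B(x_1)$ and $\Phi(1)=g_B(x_2)$ force $\alpha_0$ and $\alpha_1$ to be the constant paths at $x_1=f(x_1)$ and $x_2=f(x_2)$, giving $H(0,s)=x_1$ and $H(1,s)=x_2$. Thus $x_1$ and $x_2$ are Nielsen equivalent over $B$ via $\lambda$ and $H$.

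The final clause is then a tautology: a Nielsen class of $f$ over $B$ is an equivalence class in $\mathrm{Fix}(f)$ under the relation just described, and by the equivalence proved above this is exactly a nonempty fiber $g_B^{-1}(A)$ with $A\in\pi_0(E_B(f))$. The only place where one needs to be at all careful is in appealing to the exponential law to identify the two formulations as continuous objects; once that identification is made, the rest is bookkeeping with the boundary conditions.
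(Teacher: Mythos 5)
Your proof is correct. The paper gives no argument of its own here --- it simply cites \cite[Proposition 4.1]{G-K-09} --- so your direct verification via the exponential correspondence $\mathrm{Map}(I,E^I)\cong\mathrm{Map}(I\times I,E)$ (valid because $I$ is locally compact Hausdorff) is exactly the standard argument that the citation stands in for: the data of the homotopy $H$ in Definition~\ref{nielsen-b} and the data of a path in $E_B(f)$ joining $g_B(x_1)$ to $g_B(x_2)$ are the same data, with the fiber condition on $H$ matching the condition $p(\alpha(t))=p(x)$ in the definition of $E_B(f)$. All boundary conditions are checked correctly, and the final clause does follow tautologically once the equivalence is established.
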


\begin{proof}

Follows from \cite[Proposition 4.1]{G-K-09}.
\end{proof}

\begin{proposition} \label{prop-phi}
There is a map $\phi: \mathcal{N}_{B}(f) \to \mathcal{R}_{B}(f; x_0, \omega)$ which is an injection.   
\end{proposition}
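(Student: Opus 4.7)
The plan is to build $\phi$ as the composition $\Gamma^{-1} \circ \widetilde{g}$, where $\widetilde{g}: \mathcal{N}_B(f) \to \pi_0(E_B(f))$ is the map induced by $g_B: Fix(f) \to E_B(f)$ of Proposition \ref{prop-fix}, and $\Gamma: \mathcal{R}_B(f;x_0,\omega) \to \pi_0(E_B(f))$ is the canonical bijection of Theorem \ref{th-rb}. Concretely, given a Nielsen class $N \in \mathcal{N}_B(f)$, I would pick any representative $x \in N \subset Fix(f)$ and set $\widetilde{g}(N)$ to be the path-component $A \in \pi_0(E_B(f))$ that contains $g_B(x) = (x, c_x)$, with $c_x$ the constant path at $x$. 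The definition $\phi(N) := \Gamma^{-1}(A)$ then lands in $\mathcal{R}_B(f; x_0, \omega)$.

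Next I would verify that $\widetilde{g}$ is well-defined and injective. For well-definedness, any other representative $y \in N$ is Nielsen equivalent over $B$ to $x$, so the ``only if'' direction of Proposition \ref{prop-fix} places $g_B(y)$ in the same path-component $A$ as $g_B(x)$. For injectivity, suppose $\phi(N_1) = \phi(N_2)$; since $\Gamma$ is a bijection, we get $\widetilde{g}(N_1) = \widetilde{g}(N_2) = A$. Choosing $x_i \in N_i$, both $g_B(x_1)$ and $g_B(x_2)$ lie in $A$, and the ``if'' direction of Proposition \ref{prop-fix} forces $x_1$ and $x_2$ to be Nielsen equivalent over $B$, hence $N_1 = N_2$.

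No substantial obstacle arises, since the content is already carried by Proposition \ref{prop-fix} and Theorem \ref{th-rb}; the argument amounts to composing an established injection with an established bijection. If an explicit representative is desired, unpacking the proof of Theorem \ref{th-rb} gives the following recipe: given $x \in N$, take a path $r$ in $E$ from $x$ to $x_0$, lift the homotopy $H(t,s) = p(r(t))$ to $\widetilde{H}: I \times I \to E$ with $\widetilde{H}(t,0) = r(t)$, $\widetilde{H}(0,s) = x$, $\widetilde{H}(t,1) = f(r(t))$, and then $\phi(N) = [\widetilde{H}(1,-) \ast \omega^{-1}] \in \pi_1(Y, x_0)$, which specialises to the classical formula $[r^{-1} \ast f(r)]$ when $B$ is a point.
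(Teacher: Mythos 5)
Your proposal is correct and follows essentially the same route as the paper: define $\phi$ on a class by sending a representative fixed point $x$ to $\Gamma^{-1}$ of the path component of $(x,c_x)$ in $E_B(f)$, then read off well-definedness and injectivity from the equivalence in Proposition \ref{prop-fix}. The paper's proof is just a terser version of the same argument, without the explicit representative you unpack at the end.
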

\begin{proof}
Given $x$ a fixed point of $f$ we define $\phi([x]) = \Gamma^{-1}(C_{(x, c_x)}),$ where $c_x$ denotes the constant path at $x = f(x)$ and $\Gamma$ is defined in Theorem \ref{th-rb}. The Definition \eqref{nielsen-b} implies that two points $x, y \in Fix(f)$ belong to the same fixed point class over $B$ if and only if $(x, c_x)$ and $(y, c_y)$ belong to the same path-component of $E_B(f).$ Therefore $\phi$ is well defined and injective.
\end{proof}

\section{Geometric and algebraic periodic point classes over $B$}


Given $f: E \to E$ a map over $B$ and $n$ a positive integer, in this section we will study the Nielsen and Reidemeister classes of $f^{n}.$ 

Let $\mathcal{F}_n  \in \mathcal{N}_B(f^{n})$ be a fixed point class and $x \in \mathcal{F}_n.$  
Let $f(\mathcal{F}_n)$ be the unique class defined by $f(x).$ It is not difficult to see that $f(\mathcal{F}_n)$ is independent of the choice of $x.$
Thus, $f$ defines a function from $\mathcal{N}_B(f^{n})$ to $\mathcal{N}_B(f^{n})$ which, by abuse of notation, we also denote by $f.$  
Note that $f^{n}: \mathcal{N}_B(f^{n}) \to \mathcal{N}_B(f^{n})$ is the identity function.

\begin{definition}
	
The $f$-orbit of the periodic class $\mathcal{F}_n$ is the set;
$$\langle  \mathcal{F}_n \rangle  = \{\mathcal{F}_n, f(\mathcal{F}_n), \cdots, f^{l(\langle \mathcal{F}_n \rangle)-1}(\mathcal{F}_n) \},$$
where $l(\langle \mathcal{F}_n \rangle)$ is called the {\it length} of the orbit $\langle  \mathcal{F}_n \rangle ,$ it is the smallest positive integer $r$ such that $f^{r}(\mathcal{F}_n) = \mathcal{F}_n.$ 	
\end{definition}

By the division algorithm we can check that  $l(\langle \mathcal{F}_n \rangle)$ divides $n.$ 

Let $k$ be a positive divisor of $n,$ $\mathcal{F}_k  \in \mathcal{N}_B(f^{k})$ and $x \in \mathcal{F}_k.$ Define $\iota(\mathcal{F}_k)$ as the unique class of $\mathcal{N}_B(f^{n})$ determined by $x.$ This correspondence defines a function $$\iota:\mathcal{N}_B(f^{k}) \to \mathcal{N}_B(f^{n}).$$  

As $\mathcal{F}_k \subset \iota(\mathcal{F}_k)$, if $ \mathcal{F}_n = \iota(\mathcal{F}_k)$ we say that $ \mathcal{F}_k$ is contained in $ \mathcal{F}_n$ and write $\mathcal{F}_k \subset \mathcal{F}_n.$ As in the classical case, in general de map $\iota$ is not injective.

\begin{definition}
	
Let $\mathcal{F}_n$ be a fixed point class of $f^{n}.$ The (geometric) {\it depth}, denoted by $d(\mathcal{F}_n),$ is the smallest integer $k$ for which there exists a periodic point class $\mathcal{F}_k  \in \mathcal{N}_B(f^{k})$ such that $\mathcal{F}_k \subset \mathcal{F}_n.$ 
If the depth of $\mathcal{F}_n$ is $n$ the class  $\mathcal{F}_n$ is said to be {\it irreducible}.  	
\end{definition}

If $k = d(\mathcal{F}_n)$ then any element of the orbit $\langle  \mathcal{F}_n \rangle$ has the same depth $k.$ Thus, depth is a property of orbits. In fact, if $k = d(\mathcal{F}_n)$, then there exists a periodic point class $\mathcal{F}_k \in \mathcal{N}_B(f^{k})$ such that $\mathcal{F}_k \subset \mathcal{F}_n$. So, $f^{i}(\mathcal{F}_k) \subset f^{i}(\mathcal{F}_n)$ and $f^{i}(\mathcal{F}_k) \in \mathcal{N}_B(f^{k})$ because $f^{k}(f^{i}(x))=f^{i}(f^{k}(x))=f^{i}(x)$, for $f^{i}(\mathcal{F}_n)\in\langle  \mathcal{F}_n \rangle$ and $x\in\mathcal{F}_k$ . Thus, $k\geq d(f^{i}(\mathcal{F}_n))$ for $f^{i}(\mathcal{F}_n)\in\langle  \mathcal{F}_n \rangle$. But, if $k>l=d(f^{j}(\mathcal{F}_n))$, then there exists a periodic point class $\mathcal{F}_l \in \mathcal{N}_B(f^{l})$ such that $\mathcal{F}_l \subset f^{j}(\mathcal{F}_n)$ and $\mathcal{N}_B(f^{l})\ni f^{n-j}(\mathcal{F}_l) \subset f^{n}(\mathcal{F}_n)=\mathcal{F}_n$. It is a absurd because $l<k = d(\mathcal{F}_n)$. Therefore, $k=d(f^{i}(\mathcal{F}_n))$, for $f^{i}(\mathcal{F}_n)\in\langle  \mathcal{F}_n \rangle$.

\begin{proposition}

Let $\mathcal{F}_n  \in \mathcal{N}_B(f^{n})$ be a periodic point class with orbit length $l$ and depth $d.$ Then, $l$ divides $d$ and $\mathcal{F}_n $ contains at least $d/l$ points. In particular,	an irreducible periodic point class $\mathcal{F}_n $ of length $l$ has at least $n/l$ periodic points, where each periodic point belongs to different path components.

\end{proposition}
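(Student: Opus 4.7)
The plan is to fix a representative $x$ in a depth-realizing subclass $\mathcal{F}_d \subset \mathcal{F}_n$ (so $d \mid n$) and track the $f$-iterates $y_i := f^{il}(x)$ for $i = 0, 1, \ldots, d/l - 1$. The divisibility $l \mid d$ will be read off from the identity $f^d(\mathcal{F}_n) = \mathcal{F}_n$, while the existence of $d/l$ distinct points in $\mathcal{F}_n$ will follow from the fact that any coincidence $y_i = y_j$ would yield a subclass of strictly smaller depth.

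To prove $l \mid d$, I would start from any $x \in \mathcal{F}_d$: since $f^d(x) = x$, the class of $f^d(x)$ in $\mathcal{N}_B(f^n)$ equals $\mathcal{F}_n$, but this class is by definition $f^d(\mathcal{F}_n)$, hence $f^d(\mathcal{F}_n) = \mathcal{F}_n$. Division with remainder gives $d = ql + r$ with $0 \leq r < l$, and from $f^d = f^r \circ f^{ql}$ combined with $f^{ql}(\mathcal{F}_n) = \mathcal{F}_n$ we get $f^r(\mathcal{F}_n) = \mathcal{F}_n$; the minimality of $l$ in the definition of length then forces $r = 0$.

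Now set $m = d/l$. The points $y_i = f^{il}(x)$ for $0 \leq i \leq m-1$ all lie in $\mathrm{Fix}(f^n) \cap \mathcal{F}_n$, because $d \mid n$ (so $f^n(x) = x$, hence $f^n(y_i) = y_i$) and $f^{il}(\mathcal{F}_n) = \mathcal{F}_n$ by Step~1. I expect the main obstacle to be the distinctness of the $y_i$, and I would argue it as follows. Suppose $y_i = y_j$ with $0 \leq i < j \leq m-1$, and let $p$ be the minimal $f$-period of $y_i$. Then $p \mid (j-i)l$ and $p \mid n$, so $p \leq (j-i)l < ml = d$ and $p$ is a positive divisor of $n$. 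The class $\mathcal{F}_p \in \mathcal{N}_B(f^p)$ of $y_i$ therefore satisfies $\iota(\mathcal{F}_p) = \mathcal{F}_n$, i.e.\ $\mathcal{F}_p \subset \mathcal{F}_n$ with $p < d$, contradicting $d = d(\mathcal{F}_n)$. Hence the $m = d/l$ points are pairwise distinct elements of $\mathcal{F}_n$.

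For the irreducible case $d = n$, the argument above produces $n/l$ distinct points of $\mathcal{F}_n$, and the same minimal-period reasoning (applied to any single $y_i$ against a hypothetical proper divisor of $n$) shows each $y_i$ has minimal $f$-period exactly $n$, so each lies in $P_n(f)$. The assertion that these $n/l$ points occupy distinct path components of $P_n(f)$ then follows from the standing hypothesis that $E$ is a compact manifold without boundary: under this setting the periodic points of an irreducible Nielsen class over $B$ are isolated in $P_n(f)$, so distinct such points automatically form disjoint path components.
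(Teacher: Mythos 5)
Your treatment of the divisibility $l\mid d$ and of the count of $d/l$ points is correct and follows the same route as the paper: the paper likewise derives $f^{d}(\mathcal{F}_n)=\mathcal{F}_n$ from a point of $\mathcal{F}_d\subset\mathcal{F}_n$ and invokes the division algorithm, and it also produces the points $f^{il}(x)$, $0\le i\le d/l-1$, inside $\mathcal{F}_n$ (the paper first shows the whole orbit $S=\{x,f(x),\dots,f^{d-1}(x)\}$ is made of distinct points ``by the minimality of $d$'' and then passes to the subset $\overline{S}$, whereas you argue distinctness directly for the subset; your minimal-period argument, producing a class $\mathcal{F}_p\subset\mathcal{F}_n$ with $p<d$ and $p\mid n$ from any coincidence, is exactly the right way to make that minimality appeal precise). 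Your observation that in the irreducible case each $f^{il}(x)$ has minimal period exactly $n$, hence lies in $P_n(f)$, is also correct.

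The gap is in the final step. The assertion that ``the periodic points of an irreducible Nielsen class over $B$ are isolated in $P_n(f)$'' is false, and it is not a consequence of $E$ being a closed manifold. For instance, take the trivial bundle $T=S^{1}\times S^{1}\to S^{1}$ (projection onto the second factor) and $f(x,y)=(-x,y)$: then $\mathrm{Fix}(f)=\emptyset$, $f^{2}=\mathrm{id}$, so $P_2(f)=\mathrm{Fix}(f^{2})=T$, and all of $T$ forms a single, irreducible Nielsen class of $f^{2}$ over $S^{1}$ whose points are nowhere isolated. So isolation cannot be the mechanism. The paper's own route to the ``different path components'' claim is the identification of Nielsen classes over $B$ with nonempty preimages of path components of $E_B(f^{d})$ (Proposition \ref{prop-fix}): a path joining two fixed points of $f^{d}$ inside $\mathrm{Fix}(f^{d})$, together with the constant homotopy, realizes a Nielsen equivalence over $B$, so points lying in \emph{distinct} Nielsen classes of $f^{d}$ must lie in distinct path components of $\mathrm{Fix}(f^{d})$; the separation is then supposed to come from the points $f^{il}(x)$ lying in the classes $f^{il}(\mathcal{F}_d)$. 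To repair your argument you would need to show that the relevant points lie in pairwise distinct Nielsen classes of a suitable iterate and conclude via that proposition (note also that the paper separates points in $\mathrm{Fix}(f^{d})$, not in $P_n(f)$ as you claim); an appeal to isolation cannot be salvaged.
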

\begin{proof}
Let $\mathcal{F}_d \subset \mathcal{F}_n$ be irreducible. We have that 
$f^{d}: \mathcal{N}_B(f^{d}) \to \mathcal{N}_B(f^{d})$ is the identity, $f^{d}: \mathcal{N}_B(f^{n}) \to \mathcal{N}_B(f^{n})$ is well defined, 
$f^{d}(\mathcal{F}_n) = \mathcal{F}_n$ and so $l | d$ by the division algorithm. Further, for any $x \in \mathcal{F}_d,$ the set $ S = \{x, f(x), \cdots, f^{d-1}(x) \}$ is such that $x \in \mathcal{F}_d$, $f^{i}(x) \in f^{i}(\mathcal{F}_d)$ for $1\leq i\leq d-1$ and it has distinct elements by the minimality of $d.$ Let $d = kl$ and consider the set:
$$ \overline{S} = \{x, f^{l}(x), f^{2l}(x), \cdots, f^{(k-1)l}(x) \}.$$
Clearly $\overline{S} \subset S$ and contains $d/l$ distinct elements. Since $f^{l}(\mathcal{F}_n) \subset \mathcal{F}_n$ then $\overline{S} \subset \mathcal{F}_n.$ By definition of length and from Proposition \ref{prop-fix} each point of $S$ belongs to different path component of $Fix(f^{d}).$  
\end{proof}

Given $f: E \to E$ a map over $B$, our main purpose is to present the definition of the Nielsen periodic number of $f$ over $B.$ For this purpose, we will  study the periodic points of $f$ from an algebraic point of view.
For $n \geq 1,$ let $n(\omega): I \to E$ be the path from $x_0$ to $f^{n}(x_0)$ given by:
$$n(\omega) = \omega \ast f(\omega) \ast \cdots \ast f^{n-1}(\omega).$$ 
Since $\omega \subset Y = p^{-1}(x_0)$ and $f$ is a map over $B$ then also $n(\omega) \subset Y.$ 
Define the homomorphism $f^{n \omega}: \pi_1(Y, x_0) \to \pi_1(Y, x_0)$ by:
$$f^{n \omega}(\alpha) = n(\omega) \ast f^{n}(\alpha) \ast n(\omega)^{-1}.$$
We will denote $f^{1 \omega}$ by $f^{\omega}.$ By abuse of notation we will denote by same symbol, $f^{n},$ the map and its induced homomorphism in the fundamental group.

\begin{lemma} \label{lemma-fw}
For all $n, m, r \geq 1,$ we have:

\begin{enumerate}[(i)]
	
\item $(f^{\omega})^{n} = f^{n \omega},$
	
\item $(n+1)(\omega) = n(\omega) \ast f^{n}(\omega)  = \omega \ast f(n(\omega)),$
	
\item $f^{m \omega} \circ f^{r \omega} = f^{(m+r)\omega}.$
	
\end{enumerate} 

\end{lemma}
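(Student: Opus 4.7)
The plan is to establish the items in the order (ii), (iii), (i), since (iii) immediately implies (i) by induction once (ii) is in hand. Throughout, all equalities are to be read inside $\pi_1(Y,x_0)$, i.e.\ up to homotopy rel endpoints; this legitimises the free use of associativity of concatenation and of the identity $f(\alpha \ast \beta) = f(\alpha) \ast f(\beta)$, which holds because $f$ is continuous.

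Item (ii) is just an unwrapping of the definition. Since $n(\omega) = \omega \ast f(\omega) \ast \cdots \ast f^{n-1}(\omega)$, isolating the last factor in $(n+1)(\omega) = \omega \ast f(\omega) \ast \cdots \ast f^{n}(\omega)$ gives the first equality, while isolating the first factor and pulling $f$ out of the remaining concatenation (and using $f(f^{k}(\omega)) = f^{k+1}(\omega)$) gives the second.

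For item (iii), a direct computation from the definition of $f^{m\omega}$ and $f^{r\omega}$ reduces the claim to the single identity $m(\omega) \ast f^{m}(r(\omega)) = (m+r)(\omega)$ in $\pi_1(Y,x_0)$, since
\[
f^{m\omega}(f^{r\omega}(\alpha)) = \bigl(m(\omega) \ast f^{m}(r(\omega))\bigr) \ast f^{m+r}(\alpha) \ast \bigl(m(\omega) \ast f^{m}(r(\omega))\bigr)^{-1}.
\]
I would prove this identity by induction on $r$ using (ii): the base $r=1$ is the first equality of (ii) with $n=m$, and the inductive step chains (ii) applied to $n=m+r-1$, the inductive hypothesis, and (ii) applied again in reverse to rewrite $(r-1)(\omega) \ast f^{r-1}(\omega)$ as $r(\omega)$ (together with the compatibility of $f^{m}$ with concatenation).

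Item (i) is then a one-line induction on $n$: the case $n=1$ is tautological, and the step $(f^{\omega})^{n} = f^{\omega} \circ (f^{\omega})^{n-1} = f^{\omega} \circ f^{(n-1)\omega} = f^{n\omega}$ follows from the inductive hypothesis and (iii) with $m=1$, $r=n-1$. There is no real obstacle here; the lemma is pure bookkeeping about twisted iterates of $\omega$, and the only thing to watch is to consistently interpret the concatenation identities at the level of $\pi_1(Y,x_0)$ so that the repeated uses of associativity and of the compatibility of $f$ with concatenation are rigorous.
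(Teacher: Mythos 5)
Your proposal is correct and fills in exactly the "simple induction" that the paper's proof consists of: (ii) by unwrapping the definition of $n(\omega)$, (iii) by reducing to $m(\omega)\ast f^{m}(r(\omega)) = (m+r)(\omega)$, and (i) as a consequence of (iii). The reordering (ii), (iii), (i) and the bookkeeping are all sound, so this matches the paper's approach.
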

\begin{proof}
The proof follows by a simple induction.
\end{proof}

By Definition \eqref{def-rb} we obtain the set of Reidemeister classes of $f^{n}$ denoted by $\mathcal{R}_{B}(f^{n}; x_0, n(\omega)).$ An element $[\theta]_{n} \in \mathcal{R}_{B}(f^{n}; x_0, n(\omega))$ is called of an (algebraic) periodic point class of $f^{n}$ over $B.$

\begin{proposition} \label{prop-fw}

There is a function $$[f^{\omega}]: \mathcal{R}_{B}(f^{n}; x_0, n(\omega)) \to \mathcal{R}_{B}(f^{n}; x_0, n(\omega))$$
such that
\begin{enumerate}[(i)]
	
\item $[f^{\omega}](\theta) = [f^{\omega}(\theta)],$ 
	
\item $[f^{\omega}] \circ j = j \circ f^{\omega},$ where $j: \pi_1(Y, x_0) \to \mathcal{R}_{B}(f^{n}; x_0, n(\omega))$ is the natural projection,

\item  $[f^{\omega}]^{n}$ is the identity.
\end{enumerate}
\end{proposition}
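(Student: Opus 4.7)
The plan is to prove (i) as the substantive step, derive (ii) formally from the definition, and then obtain (iii) by combining Lemma \ref{lemma-fw} with an explicit witness built from $\theta$ itself.

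For (i), the content is well-definedness of the assignment $[\theta]_n \mapsto [f^{\omega}(\theta)]_n$. Starting from a witness $\widetilde{H}\colon I\times I\to E$ implementing $[c]\ast_B[\theta]=[\theta']$ for $f^n$ over $n(\omega)$, I would construct a new witness $\widetilde{K}$ for the element $c':=\omega\ast f(c)\ast\omega^{-1}\in\pi_1(E,x_0)$, which I claim satisfies $[c']\ast_B[f^{\omega}(\theta)]=[f^{\omega}(\theta')]$. Concretely, I would build $\widetilde{K}$ by horizontally gluing three vertical strips in $E$: a left piece $S_1\subset Y$ with bottom edge $\omega$, top edge $f^n(\omega)$, left edge $f^{\omega}(\theta)\ast n(\omega)$ and right edge $f(\theta\ast n(\omega))$; the middle piece obtained by reparametrising $f\circ\widetilde{H}$ (which is still a lift of $p\circ c$ since $p\circ f=p$); and a symmetric right piece $S_3\subset Y$ mirroring $S_1$ for $\theta'$. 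The existence of $S_1$ reduces, using the identities $(n+1)(\omega)=\omega\ast f(n(\omega))=n(\omega)\ast f^n(\omega)$ from Lemma \ref{lemma-fw}(ii), to verifying that the boundary loop collapses to $\omega f(\theta)\,f(\theta)^{-1}\omega^{-1}=1$ in $\pi_1(Y,x_0)$, a routine algebraic check; the analogous check handles $S_3$. Because $S_1,S_3\subset Y$, the base projection of $\widetilde{K}$ agrees with $p\circ c'$, so $\widetilde{K}$ is a legitimate lift realising the Reidemeister relation between $f^{\omega}(\theta)$ and $f^{\omega}(\theta')$.

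Part (ii) is immediate from (i), since by construction $j(f^{\omega}(\theta))=[f^{\omega}](j(\theta))$. For (iii), Lemma \ref{lemma-fw}(i) together with (i) gives $[f^{\omega}]^n=[f^{n\omega}]$, so it suffices to show $[f^{n\omega}(\theta)]_n=[\theta]_n$ for every $[\theta]$. I would use as witness the element $c:=\theta$, regarded as a loop in $E$ at $x_0$ via the fibre inclusion $Y\hookrightarrow E$. Since $\theta\subset Y$, the base homotopy $H(t,s)=p(\theta(t))$ is the constant map $p(x_0)$, so a valid lift $\widetilde{H}$ takes values in $Y$; its existence amounts to the path identity $\theta\cdot\bigl(f^{n\omega}(\theta)\ast n(\omega)\bigr)\simeq \bigl(\theta\ast n(\omega)\bigr)\cdot f^n(\theta)$ in $\pi_1(Y,x_0)$, which unpacks instantly to $\theta\cdot n(\omega)\cdot f^n(\theta)=\theta\cdot n(\omega)\cdot f^n(\theta)$ once $f^{n\omega}(\theta)$ is replaced by $n(\omega)\,f^n(\theta)\,n(\omega)^{-1}$.

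The main obstacle is the bookkeeping for part (i): packaging the purely algebraic cancellations above into an honest lifted homotopy whose base projection factors correctly through $p\circ c'$, and checking that the three strips glue consistently along their shared vertical edges $t=1/3$ and $t=2/3$. Once the three-strip decomposition is set up and the two boundary identities are verified in $\pi_1(Y,x_0)$, everything else is formal.
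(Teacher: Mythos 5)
Your argument is correct and matches the paper's proof in all essentials: the three strips you glue for well-definedness are exactly the paper's concatenation $\widetilde{H}(1-t,s)\ast(f\circ\widetilde{H}_2)(t,s)\ast\widetilde{H}_1(t,s)$, where the outer strips are the lifts over $\omega^{\pm1}$ that the paper uses to define $[f^{\omega}]$, and your boundary computation for $S_1$ is precisely the paper's verification of item (i) via $(n+1)(\omega)=n(\omega)\ast f^{n}(\omega)=\omega\ast f(n(\omega))$. Your treatment of (iii), taking $c=\theta$ as the witness so that the base homotopy is constant and $[\theta]\ast_B[\theta]=[f^{n\omega}(\theta)]$, is likewise the paper's argument.
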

\begin{proof}
Given $\theta \in \pi_1(Y, x_0)$ from \cite[I.7.16]{Whi} we can lift the homotopy $H: I \times I \to B$ defined by $H(t,s) = p(\omega^{-1}(t))$ to a homotopy $\widetilde{H}: I \times I \to E$ such that
$$\widetilde{H}(t,0) = \omega^{-1}(t), \,\,\,\, \widetilde{H}(0,s) = f(\theta(s) \ast n(\omega(s))),  \,\,\,\,  \widetilde{H}(t,1) = f^{n}(\omega^{-1}(t)) $$  
for all $t,s \in I.$ We define
\begin{equation} \label{def-fw}
[f^{\omega}](\theta) = [\widetilde{H}(1,s) \ast n(\omega)^{-1}] \in \pi_1(Y, x_0).
\end{equation}

Given another element $\theta^{'} \in \pi_1(Y, x_0) $ we have $[f^{\omega}](\theta^{'}) = [\widetilde{H}_1(1,s) \ast n(\omega)^{-1}]$  for some homotopy $\widetilde{H}_1$ satisfying the conditions of Definition \eqref{def-fw}. 

We suppose that $[\theta] = [\theta^{'}]$ in $\mathcal{R}_{B}(f^{n}; x_0, n(\omega)).$ By Equation \eqref{actionastB}  there are $[c] \in \pi_1(E, x_0)$ and a homotopy  $\widetilde{H}_2: I \times I \to E$ such that $\widetilde{H}_2(t,0) = c(t), \,\,\,\, \widetilde{H}_2(0,s) = \theta(s) \ast n(\omega(s)),  \,\,\,\,  \widetilde{H}_2(t,1) = f^{n}(c(t))$ and $\widetilde{H}_2(1,s) = \theta^{'}(s) \ast n(\omega)(s).$ We consider the homotopy $\widetilde{H}_3(t,s) = f \circ \widetilde{H}_2(t,s).$ 
The homotopy $\widetilde{H}_4(t,s)$ defined by the concatenation of homotopies;
$$ 
\widetilde{H}_4(t,s) = \widetilde{H}(1-t,s) \ast \widetilde{H}_3(t,s) \ast \widetilde{H}_1(t,s)
$$ 
satisfies the following conditions; $\widetilde{H}_4(t,0) = f^{\omega}(c(t)),$ 
$\widetilde{H}_4(t,1) = f^{n}(f^{\omega}(c(t))),$ $\widetilde{H}_4(0,s) =  \widetilde{H}(1,s)$ and $\widetilde{H}_4(1,s) =  \widetilde{H}_1(1,s).$ Thus, $[f^{\omega}](\theta) $ and $ [f^{\omega}](\theta^{'})$ belong to the same path component of $E_B(f^{n}).$ From Theorem \ref{th-rb} we have $[f^{\omega}](\theta) = [f^{\omega}](\theta^{'})$ in $\mathcal{R}_{B}(f^{n}; x_0, n(\omega)),$ and therefore $[f^{\omega}]$ is well defined.
\begin{center}
	\setlength{\unitlength}{0,8cm}
	\begin{picture}(13,3.3)
		\thicklines
		\put(2,0.5){\line(1,0){9}}
		\put(2,2.5){\line(1,0){9}}
		\put(2,0.5){\line(0,1){2}}
		\put(5,0.5){\line(0,1){2}}
		\put(8,0.5){\line(0,1){2}}
		\put(11,0.5){\line(0,1){2}}
		\put(3.1,0){$w$}
		\put(2.7,2.8){$f^{n}(w)$}
		\put(6.4,0){$f(c)$}
		\put(5.8,2.8){$f^{n}(f(c))$}
		\put(9.4,0){$w^{-1}$}
		\put(8.8,2.8){$f^{n}(w^{-1})$}
		\put(0.1,1.4){$\widetilde{H}(1,s)$}
		\put(5.1,1.4){$f(\theta\ast n(\omega))$}
		\put(8.1,1.4){$f(\theta^{'}\ast n(\omega))$}
		\put(11.1,1.4){$\widetilde{H}_{1}(1,s)$}
	\end{picture}
\end{center}	

Item $(i).$ Note that the image of $\widetilde{H}$ is contained in $Y$ because $p \circ \widetilde{H} = H = p \circ \omega^{-1} = p(x_0)$ since  $\omega$ is a path in $Y.$ From the boundary conditions we have the following, in $\pi_1(Y, x_0);$
$$
\begin{array}{lll}
[f^{\omega}](\theta) & = & [\omega \ast f(\theta) \ast f(n(\omega)) \ast f^{n}(\omega^{-1}) \ast n(\omega)^{-1}] \\
& = & [(\omega \ast f(\theta) \ast \omega^{-1}) \ast (\omega \ast f(n(\omega)))  \ast (n+1)(\omega)^{-1}]  \\ 
& = & [(\omega \ast f(\theta) \ast \omega^{-1}) \ast (n+1)(\omega)  \ast (n+1)(\omega)^{-1}]  \\
& = & [\omega \ast f(\theta) \ast \omega^{-1}]  \\
& = & [f^{\omega}(\theta)].  \\
\end{array}
$$ 
From this, we obtain $[f^{\omega}]^{n}(\theta)= [f^{n\omega}(\theta)].$ The item $(ii)$ follows from definition and item $(i).$ 

Item $(iii).$ Using the Definition \eqref{def-rb} with $f^{n}$ and $n(\omega)$ we have $[\theta] \ast_{B} [\theta] = [f^{n\omega}(\theta)].$ Therefore, in $\mathcal{R}_{B}(f^{n}; x_0, n(\omega))$ we have $[f^{n\omega}(\theta)] = [\theta],$ that is, $[f^{\omega}]^{n}$ is the identity function.
\end{proof}

\begin{definition}
	
The (algebraic) orbit of the element $[\theta]_{n}  \in \mathcal{R}_{B}(f^{n}; x_0, n(\omega))$ is the set;
$$\langle  [\theta]_{n} \rangle  = \{[\theta]_{n}, [f^{\omega}]([\theta]_{n}), \cdots, [f^{\omega}]^{l(\langle [\theta]_{n} \rangle)-1}([\theta]_{n}) \},$$
where $l(\langle [\theta]_{n} \rangle),$ called the {\it length} of the orbit $\langle  [\theta]_{n} \rangle ,$ is the smallest positive integer $r$ such that $[f^{\omega}]^{r}([\theta]_{n}) = [\theta]_{n}.$ 
As in the geometric case $l(\langle [\theta]_{n} \rangle)$ divides $n.$  
\end{definition}

\begin{remark} \label{re-orbit}
Note that, by the definition given in Equation \eqref{def-fw}, the length of the orbit of an element $[\theta]_n$ in $\mathcal{R}_{B}(f^{n}; x_0, n(\omega))$ is the same when we consider the orbit of this element $[\theta]_n$ in $\mathcal{R}(f_{|_{Y}}^{n}; x_0, n(\omega)),$ because $[f^{\omega}](\theta) = [f^{\omega}(\theta)]$ in $\pi_1(Y, x_0).$ 
\end{remark}

For $m | n$ we consider the function $\gamma_{m,n}: \pi_1(Y, x_0) \to \pi_1(Y, x_0)$ defined by:
$$ \gamma_{m,n}(\theta) = \theta \ast f^{m \omega}(\theta) \ast f^{2m \omega}(\theta) \ast \cdots \ast f^{(n-m)\omega}(\theta). $$ 
We have:
$$
 \begin{array}{lll}
\gamma_{m,n}(\theta)  & = & \theta \ast m(\omega) \ast f^{m}(\theta) \ast m(\omega)^{-1}  \ast  2m(\omega) \ast f^{2m}(\theta) \ast 2m(\omega)^{-1} \ast \cdots \ast \\ 
 & & (n-m)(\omega) \ast f^{n-m}(\theta) \ast (n-m)(\omega)^{-1}. \\
\end{array}
$$
But,
$$
\begin{array}{lll}
m(\omega)^{-1} \ast 2m(\omega) & = & f^{m-1}(\omega)^{-1} \ast \cdots \ast  f(\omega)^{-1} \ast \omega^{-1} \ast \omega \ast f(\omega) \ast \cdots \ast f^{2m-1}(\omega) \\
& = & f^{m}(\omega) \ast \cdots \ast f^{2m-1}(\omega) \\
& = & f^{m}( \omega \ast \cdots \ast f^{m-1}(\omega) ) \\
& = & f^{m}(m(\omega)). \\	
\end{array}
$$
By induction, we obtain:
$$ im(\omega)^{-1} \ast (i+1)m(\omega) = f^{im}(m(\omega)). $$
From the above relations, we have:
\begin{equation} \label{ex-gamma}
\begin{array}{lll}
\gamma_{m,n}(\theta)  & = & (\theta \ast m(\omega)) \ast f^{m}(\theta \ast m(\omega)) \ast f^{2m}(\theta \ast m(\omega)) \ast \cdots \ast f^{n-m}(\theta \ast m(\omega)) \ast n(\omega)^{-1}. \\
\end{array}
\end{equation}

\begin{lemma}
The function $\gamma_{m,n}$ induces a function 
$$[\gamma_{m,n}]:  \mathcal{R}_{B}(f^{m}; x_0, m(\omega)) \to \mathcal{R}_{B}(f^{n}; x_0, n(\omega)) $$
such that if $k | m | n $ then $[\gamma_{m,n}] \circ [\gamma_{k,m}] = [\gamma_{k,n}].$
\end{lemma}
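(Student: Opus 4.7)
The plan is to establish well-definedness of $[\gamma_{m,n}]$ directly from the action $\ast_B$ of Definition \ref{def-rb}, and to derive the composition identity by proving $\gamma_{m,n}\circ\gamma_{k,m}=\gamma_{k,n}$ already at the level of $\pi_1(Y,x_0)$ using the product formula in Equation \eqref{ex-gamma}. Suppose $[\theta]=[\theta']$ in $\mathcal{R}_B(f^m;x_0,m(\omega))$, witnessed by $[c]\in\pi_1(E,x_0)$ and a lift $\widetilde{H}\colon I\times I\to E$ of $H(t,s)=p(c(t))$ satisfying the boundary conditions of Definition \ref{def-rb}. Set $q=n/m$ and build $\widetilde{H}''\colon I\times I\to E$ by stacking vertically on the strips $I\times[i/q,(i+1)/q]$ the rescaled homotopies $f^{im}\circ\widetilde{H}$ for $i=0,1,\ldots,q-1$. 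These glue continuously because the top of the $i$-th strip is $f^{(i+1)m}(c)$, matching the bottom of the $(i+1)$-th; and since $f$ is over $B$, we have $p\circ f^{im}\circ\widetilde{H}=p\circ c$, so $\widetilde{H}''$ is a lift of the same homotopy $H$.

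Reading off boundaries, $\widetilde{H}''(t,0)=c(t)$ and $\widetilde{H}''(t,1)=f^{n}(c(t))$, while by Equation \eqref{ex-gamma} the left and right vertical edges become exactly $\gamma_{m,n}(\theta)\ast n(\omega)$ and $\gamma_{m,n}(\theta')\ast n(\omega)$. Therefore $[c]\ast_B[\gamma_{m,n}(\theta)]=[\gamma_{m,n}(\theta')]$ in $\mathcal{R}_B(f^n;x_0,n(\omega))$, so $[\gamma_{m,n}]$ descends to Reidemeister classes.

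For the composition, with $k\mid m\mid n$, a double application of Equation \eqref{ex-gamma} gives $\gamma_{m,n}(\gamma_{k,m}(\theta))\ast n(\omega)=\prod_{j=0}^{n/m-1}\prod_{i=0}^{m/k-1} f^{jm+ik}(\theta\ast k(\omega))$. Re-indexing by $l=jm/k+i$, which runs bijectively over $\{0,\ldots,n/k-1\}$, collapses the double concatenation to $\prod_{l=0}^{n/k-1}f^{lk}(\theta\ast k(\omega))=\gamma_{k,n}(\theta)\ast n(\omega)$; cancelling $n(\omega)$ yields $\gamma_{m,n}\circ\gamma_{k,m}=\gamma_{k,n}$ in $\pi_1(Y,x_0)$, which descends to the asserted identity of maps on Reidemeister classes. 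The only real subtlety is the vertical-stacking step: one must check that the strips concatenate continuously and that each projects down to $p\circ c$, both of which are immediate from $f$ being a fiber map over $B$. Once these facts are in hand, everything else is formal manipulation of the explicit formula \eqref{ex-gamma}.
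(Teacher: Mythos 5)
Your proof of well-definedness is essentially the paper's own argument: both take the witnessing lift $\widetilde{H}$ for $[\theta]=[\theta']$ in $\mathcal{R}_{B}(f^{m};x_0,m(\omega))$, stack the homotopies $f^{im}\circ\widetilde{H}$ for $i=0,\dots,n/m-1$ (which remain lifts of the same base homotopy because $f$ is over $B$), and read off the side edges via Equation \eqref{ex-gamma} as $\gamma_{m,n}(\theta)\ast n(\omega)$ and $\gamma_{m,n}(\theta')\ast n(\omega)$. One point in your favour: the paper's proof stops after well-definedness and never addresses the claimed identity $[\gamma_{m,n}]\circ[\gamma_{k,m}]=[\gamma_{k,n}]$, whereas your re-indexing $l=j(m/k)+i$ of the double concatenation $\prod_{j}\prod_{i}f^{jm+ik}(\theta\ast k(\omega))$ correctly establishes $\gamma_{m,n}\circ\gamma_{k,m}=\gamma_{k,n}$ already in $\pi_1(Y,x_0)$, from which the identity on Reidemeister classes is immediate.
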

\begin{proof}
We suppose that $[\theta] = [\theta^{'}]$ in $\mathcal{R}_{B}(f^{m}; x_0, m(\omega)).$ Thus there is a lift $\widetilde{H}: I \times I \to E$ of the homotopy $H(t,s)= p(c(t))$ such that
$$\widetilde{H}(t,0) = c(t), \,\,\,\,  \widetilde{H}(t,1) = f^{m}(c(t)), \,\,\,\, \widetilde{H}(0,s) = \theta(s) \ast m(\omega)(s),  \,\,\,\, \widetilde{H}(1,s) = \theta^{'}(s) \ast m(\omega)(s), $$  
for all $t,s \in I.$
We define the homotopies $H_{k} = f^{km} \circ \widetilde{H},$ for $k = 0, \cdots , \frac{n}{m} - 1.$ From Equation \eqref{ex-gamma} the concatenation $$\overline{H} = H_0 \ast H_1 \ast \cdots \ast H_{\frac{n}{m} - 1}$$ satisfies;  
$\overline{H}(t,0) = c(t), \,\,\,\,  \overline{H}(t,1) = f^{n}(c(t)), \,\,\,\, \overline{H}(0,s) = \gamma_{m,n}(\theta(s)) \ast n(\omega)(s),  \,\,\,\, \overline{H}(1,s) = \gamma_{m,n}(\theta^{'}(s)) \ast n(\omega)(s), $ for all $t,s \in I.$ The homotopy $\overline{H}$ is represented in the picture below, where each edge label represents the image of that edge by $\overline{H}$ in $E.$
\begin{center}
\setlength{\unitlength}{0,8cm}
\begin{picture}(16,3.6)
\thicklines
\put(0,0.5){\line(1,0){16}}
\put(0,2.5){\line(1,0){16}}
\put(0,0.5){\line(0,1){2}}
\put(3.5,0.5){\line(0,1){2}}
\put(8,0.5){\line(0,1){2}}
\put(12,0.5){\line(0,1){2}}
\put(16,0.5){\line(0,1){2}}
\put(0.8,-0.2){$\theta^{'} \ast m(\omega)$}
\put(0.8,2.8){$\theta \ast m(\omega)$}
\put(4.4,-0.2){$f^{m}(\theta^{'} \ast m(\omega))$}
\put(4.4,2.8){$f^{m}(\theta \ast m(\omega))$}
\put(9.7,0){$\cdots$}
\put(9.7,1.5){$\cdots$}
\put(9.7,2.8){$\cdots$}
\put(12.1,-0.2){$f^{n-m}(\theta^{'} \ast m(\omega))$}
\put(12.1,2.8){$f^{n-m}(\theta \ast m(\omega))$}
\put(-0.6,1.4){$c$}
\put(1.9,1.4){$f^{m}(c)$}
\put(6.2,1.4){$f^{2m}(c)$}
\put(12.3,1.4){$f^{n-m}(c)$}
\put(16.3,1.4){$f^{n}(c)$}
\end{picture}
\end{center}

\

Thus $[\gamma_{m,n}(\theta)] = [\gamma_{m,n}(\theta^{'})]$ in $\mathcal{R}_{B}(f^{n}; x_0, n(\omega)),$ and therefore $\gamma_{m,n}$ is well defined. 
\end{proof}

\begin{definition}
	
An element $[\alpha]_{n}  \in \mathcal{R}_{B}(f^{n}; x_0, n(\omega))$ is said to be {\it reducible} to $m$ if there exists $[\beta]_{m}  \in \mathcal{R}_{B}(f^{m}; x_0, m(\omega))$ such that $[\gamma_{m,n}]([\beta]_{m}) =  [\alpha]_{n}.$  In this case, we say that $[\beta]_{m}$ is {\it contained} in $[\alpha]_{n}$ or that $[\alpha]_{n}$ {\it contains} $[\beta]_{m}.$ If $[\alpha]_{n}$ is reducible to $m$ then $m$ divides $n.$ If $[\alpha]_{n}$ is not reducible to $m$ to any $m < n$ then it is said to be {\it irreducible}.    
\end{definition}

\begin{definition}

The (algebraic) {\it depth} of an element $[\alpha]_{n}  \in \mathcal{R}_{B}(f^{n}; x_0, n(\omega))$ is the smallest positive integer $m$ to which $[\alpha]_{n}$ is reducible. We denote the depth of $[\alpha]_{n}$ by $d([\alpha]_{n}).$ Note that $d([\alpha]_{n}) | n$ and for any $n;$ $d([c_{x_0}]_{n}) = 1.$
\end{definition}

The next result shows that depth is a property of orbits.

\begin{lemma} \label{lemma-fwgamma}
If an element $[\alpha]_{n}  \in \mathcal{R}_{B}(f^{n}; x_0, n(\omega))$ is reducible to $m$ then any element in its orbit is also reducible to $m.$
\end{lemma}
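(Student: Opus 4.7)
The plan is to show that reducibility is compatible with the action $[f^\omega]$, i.e.\ that
\[
[f^\omega]\circ[\gamma_{m,n}] \;=\; [\gamma_{m,n}]\circ[f^\omega],
\]
as maps from $\mathcal{R}_{B}(f^{m}; x_0, m(\omega))$ to $\mathcal{R}_{B}(f^{n}; x_0, n(\omega))$. Once this commutation is established, if $[\alpha]_n = [\gamma_{m,n}]([\beta]_m)$, then applying $[f^\omega]$ to both sides gives
\[
[f^\omega]([\alpha]_n) \;=\; [\gamma_{m,n}]\bigl([f^\omega]([\beta]_m)\bigr),
\]
so $[f^\omega]([\alpha]_n)$ is again reducible to $m$. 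Iterating, every element $[f^\omega]^k([\alpha]_n)$ in the orbit $\langle [\alpha]_n\rangle$ is reducible to $m$.

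The key computation takes place at the level of homomorphisms of $\pi_1(Y,x_0)$. Using the defining formula $\gamma_{m,n}(\theta) = \theta \ast f^{m\omega}(\theta) \ast f^{2m\omega}(\theta) \ast \cdots \ast f^{(n-m)\omega}(\theta)$ and the fact that $f^\omega$ is a group homomorphism, I get
\[
f^{\omega}\bigl(\gamma_{m,n}(\theta)\bigr) \;=\; f^{\omega}(\theta) \ast f^{\omega}\!\bigl(f^{m\omega}(\theta)\bigr) \ast \cdots \ast f^{\omega}\!\bigl(f^{(n-m)\omega}(\theta)\bigr).
\]
By Lemma \ref{lemma-fw}(iii), $f^{\omega}\circ f^{km\omega} = f^{(km+1)\omega} = f^{km\omega}\circ f^{\omega}$, so this becomes
\[
f^{\omega}(\theta) \ast f^{m\omega}\!\bigl(f^{\omega}(\theta)\bigr) \ast f^{2m\omega}\!\bigl(f^{\omega}(\theta)\bigr) \ast \cdots \ast f^{(n-m)\omega}\!\bigl(f^{\omega}(\theta)\bigr) \;=\; \gamma_{m,n}\bigl(f^{\omega}(\theta)\bigr),
\]
i.e.\ $f^{\omega}\circ\gamma_{m,n} = \gamma_{m,n}\circ f^{\omega}$ in $\pi_1(Y,x_0)$.

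To pass from the level of $\pi_1(Y,x_0)$ down to Reidemeister sets, I use Proposition \ref{prop-fw}(ii), which tells me that $[f^{\omega}]$ is induced by $f^{\omega}$ under the natural projections $j_m : \pi_1(Y,x_0)\to\mathcal{R}_{B}(f^{m}; x_0, m(\omega))$ and $j_n : \pi_1(Y,x_0)\to\mathcal{R}_{B}(f^{n}; x_0, n(\omega))$; the map $[\gamma_{m,n}]$ is induced by $\gamma_{m,n}$ in the same sense (this is exactly the content of the preceding lemma). Chasing through $j_n\circ f^{\omega}\circ\gamma_{m,n} = j_n\circ\gamma_{m,n}\circ f^{\omega}$ yields the desired identity $[f^{\omega}]\circ[\gamma_{m,n}] = [\gamma_{m,n}]\circ[f^{\omega}]$, which completes the argument.

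The only delicate point is the bookkeeping in the displayed concatenation: one must check that the $m$-fold and $2m$-fold twists by $\omega$ cancel in the correct order so that the $f^{\omega}$ pulls through each $f^{km\omega}$. This is exactly the algebraic content of Lemma \ref{lemma-fw}(iii), and once stated cleanly it is a routine verification — no Reidemeister-level homotopy diagram is needed, since the commutation already holds on the nose in $\pi_1(Y,x_0)$.
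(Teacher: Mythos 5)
Your proof is correct and follows essentially the same route as the paper: both reduce the lemma to the commutativity of the square $[f^{\omega}]\circ[\gamma_{m,n}]=[\gamma_{m,n}]\circ[f^{\omega}]$. In fact you supply the verification the paper only gestures at (``an induction argument''), by checking $f^{\omega}\circ\gamma_{m,n}=\gamma_{m,n}\circ f^{\omega}$ directly in $\pi_1(Y,x_0)$ via Lemma \ref{lemma-fw}(iii) and then descending through the projections $j_m$, $j_n$ using Proposition \ref{prop-fw}(ii) and the defining property of $[\gamma_{m,n}]$.
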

\begin{proof}
Using an induction argument we can show that the following diagram;
\begin{equation}
\begin{gathered}
\xymatrix{
\mathcal{R}_{B}(f^{m}; x_0, m(\omega)) \ar[r]^-{[\gamma_{m,n}]} \ar[d]_-{[f^{\omega}]} & \mathcal{R}_{B}(f^{n}; x_0, n(\omega)) \ar[d]^-{[f^{\omega}]} \\
\mathcal{R}_{B}(f^{m}; x_0, m(\omega)) \ar[r]^-{[\gamma_{m,n}]} & \mathcal{R}_{B}(f^{n}; x_0, n(\omega)).
}\end{gathered}
\end{equation}
is commutative. Thus the result follows.
\end{proof}

From above result we can define the {\it depth of the orbit $\langle  [\theta]_{n} \rangle $} by;  $d(\langle  [\theta]_{n} \rangle) = d([\theta]_{n} ).$

\begin{lemma}
The length $l$ of an orbit $\langle  [\theta]_{n} \rangle$ divides its depth $d.$
\end{lemma}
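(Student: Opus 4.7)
The plan is to exploit the commuting square from Lemma \ref{lemma-fwgamma} together with the fact from Proposition \ref{prop-fw}(iii) that $[f^{\omega}]^{m}$ is the identity on $\mathcal{R}_{B}(f^{m}; x_0, m(\omega))$, and then invoke the division algorithm exactly as in the classical periodic-point proof.

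First, I set $d = d(\langle [\theta]_{n} \rangle)$ and $l = l(\langle [\theta]_{n} \rangle)$. By the definition of depth, there exists a class $[\beta]_{d} \in \mathcal{R}_{B}(f^{d}; x_0, d(\omega))$ such that $[\gamma_{d,n}]([\beta]_{d}) = [\theta]_{n}$. Applying Proposition \ref{prop-fw}(iii) to the Reidemeister set of $f^{d}$, we obtain $[f^{\omega}]^{d}([\beta]_{d}) = [\beta]_{d}$.

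Next, I iterate the commutative square of Lemma \ref{lemma-fwgamma} with $m = d$ to get
$$[f^{\omega}]^{d} \circ [\gamma_{d,n}] = [\gamma_{d,n}] \circ [f^{\omega}]^{d}.$$
Evaluating both sides at $[\beta]_{d}$ and using the previous step yields
$$[f^{\omega}]^{d}([\theta]_{n}) = [f^{\omega}]^{d}\bigl([\gamma_{d,n}]([\beta]_{d})\bigr) = [\gamma_{d,n}]\bigl([f^{\omega}]^{d}([\beta]_{d})\bigr) = [\gamma_{d,n}]([\beta]_{d}) = [\theta]_{n}.$$
Hence $d$ is a period of $[\theta]_{n}$ under $[f^{\omega}]$.

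Finally, since $l$ is by definition the smallest positive integer $r$ with $[f^{\omega}]^{r}([\theta]_{n}) = [\theta]_{n}$, writing $d = ql + s$ with $0 \le s < l$ and applying $[f^{\omega}]^{s}$ forces $s = 0$ by minimality of $l$; that is, $l \mid d$. I do not expect a genuine obstacle here: the only subtlety is verifying that the diagram in Lemma \ref{lemma-fwgamma} truly commutes for the iterate $[f^{\omega}]^{d}$, but this is immediate by a straightforward induction from the single-step commutativity already established.
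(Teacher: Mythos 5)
Your argument is correct and is essentially the paper's own proof: both use the commutativity of $[\gamma_{d,n}]$ with $[f^{\omega}]$ from Lemma \ref{lemma-fwgamma}, the fact that $[f^{\omega}]^{d}$ is the identity on $\mathcal{R}_{B}(f^{d}; x_0, d(\omega))$, and the division algorithm. The only cosmetic difference is that the paper runs the computation for an arbitrary $m$ to which the class is reducible and then specializes, whereas you work directly with $m = d$.
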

\begin{proof}
We suppose that for $[\alpha]_{n}  \in \mathcal{R}_{B}(f^{n}; x_0, n(\omega))$ there exists $[\beta]_{m}  \in \mathcal{R}_{B}(f^{m}; x_0, m(\omega))$ such that $[\gamma_{m,n}]([\beta]_{m}) =  [\alpha]_{n}.$ By Lemma \eqref{lemma-fwgamma} and the fact that $[f^{\omega}]^{m}$ is the identity on $\mathcal{R}_{B}(f^{m}; x_0, m(\omega))$ we have;
$$[f^{\omega}]^{m}([\alpha]_{n} ) = [f^{\omega}]^{m}([\gamma_{m,n}]([\beta]_{m}) ) = [\gamma_{m,n}]([f^{\omega}]^{m}([\beta]_{m})) = [\gamma_{m,n}]([\beta]_{m}) = [\alpha]_{n} ,$$
and so $l([\alpha]_{n})$ divides $m$ by the division algorithm.
\end{proof}

The next result connects the algebraic e geometric notions of the theory.

\begin{proposition}
For any integers $m$ and $n$ such that $m | n$ the following diagrams commute.
$$
\xymatrix{
\mathcal{N}_{B}(f^{n}) \ar[r]^-{\phi} \ar[d]_-{f} & \mathcal{R}_{B}(f^{n}; x_0, n(\omega)) \ar[d]^-{[f^{\omega}]} \\
\mathcal{N}_{B}(f^{n}) \ar[r]^-{\phi} & \mathcal{R}_{B}(f^{n}; x_0, n(\omega)),
} \hspace{3cm}
\xymatrix{
\mathcal{N}_{B}(f^{m}) \ar[r]^-{\phi} \ar[d]_-{\iota} & \mathcal{R}_{B}(f^{m}; x_0, m(\omega)) \ar[d]^-{[\gamma_{m,n}]} \\
\mathcal{N}_{B}(f^{n}) \ar[r]^-{\phi} & \mathcal{R}_{B}(f^{n}; x_0, n(\omega)).
}
$$
\end{proposition}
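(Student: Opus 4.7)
The plan is to establish both commutativities by constructing explicit homotopy lifts that realise the map $\phi$ and then matching their boundary edges by means of Lemma \ref{lemma-fw}. For the common setup, given $x\in Fix(f^{k})$ I would fix a path $r_{x}$ from $x$ to $x_{0}$ and, following the proof of Theorem \ref{th-rb}, lift the homotopy $H(t,s)=p(r_{x}(t))$ to $\widetilde{H}_{k}: I\times I\to E$ starting from $(x,c_{x})$, obtaining $\widetilde{H}_{k}(t,0)=r_{x}(t)$, $\widetilde{H}_{k}(t,1)=f^{k}(r_{x}(t))$, $\widetilde{H}_{k}(0,s)=x$ (a constant path, valid since $f^{k}(x)=x$), and $\widetilde{H}_{k}(1,s)=\alpha_{k}(s)$, so that $\phi([x]_{k})=[\alpha_{k}\ast k(\omega)^{-1}]$.

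For the diagram with $\iota$, where $m\mid n$ and $x\in Fix(f^{m})\subset Fix(f^{n})$, I would construct a lift for $\phi([x]_{n})$ out of $\widetilde{H}_{m}$ by stacking in the $s$-direction:
\[
\widetilde{H}_{n}\;=\;\widetilde{H}_{m}\;\star\;(f^{m}\!\circ\widetilde{H}_{m})\;\star\;\cdots\;\star\;(f^{n-m}\!\circ\widetilde{H}_{m}).
\]
Because $f^{jm}(x)=x$ for all $j$, the $t=0$ edge remains constantly $x$, so $\widetilde{H}_{n}$ is a valid lift; its $t=1$ edge is $\alpha_{m}\ast f^{m}(\alpha_{m})\ast\cdots\ast f^{n-m}(\alpha_{m})$. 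Comparing with the expansion of $\gamma_{m,n}$ in \eqref{ex-gamma} identifies $\phi([x]_{n})$ with $[\gamma_{m,n}](\phi([x]_{m}))$, which gives the commutativity.

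For the diagram with $f$, I would compute $\phi([f(x)]_{n})$ using the path $r_{f(x)}=f(r_{x})\ast\omega^{-1}$ from $f(x)$ to $x_{0}$, and assemble $\widetilde{H}'_{n}$ as a horizontal concatenation of $f\circ\widetilde{H}_{n}$ on the $f(r_{x})$ half with a square $K$ over $\omega^{-1}$ on the $\omega^{-1}$ half. Since $\omega\subset Y$, the square $K$ lies entirely in $Y$, and I would choose its right edge to be $\alpha'_{n}:=\omega\ast f(\alpha_{n})\ast f^{n}(\omega)^{-1}$, which trivialises the boundary loop of $K$ and hence guarantees that $K$ exists. This yields
\[
\phi([f(x)]_{n})=[\,\omega\ast f(\alpha_{n})\ast f^{n}(\omega)^{-1}\ast n(\omega)^{-1}\,],
\]
while Proposition \ref{prop-fw}(i) gives
\[
[f^{\omega}](\phi([x]_{n}))=[\,\omega\ast f(\alpha_{n})\ast f(n(\omega))^{-1}\ast\omega^{-1}\,].
\]
After cancelling the common prefix $\omega\ast f(\alpha_{n})$, the required equality in $\pi_{1}(Y,x_{0})$ reduces to $n(\omega)\ast f^{n}(\omega)=\omega\ast f(n(\omega))$, which is exactly Lemma \ref{lemma-fw}(ii).

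The main obstacle I expect is the geometric bookkeeping: at every concatenation I must check that the constructed homotopy still covers the intended path in $B$ and that the relevant edges lie inside $Y$, so that they represent elements of $\pi_{1}(Y,x_{0})$. This uses only that $f$ is a map over $B$ and that $\omega\subset Y$, but it is where indexing errors are most likely.
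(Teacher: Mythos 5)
Your proposal is correct and follows essentially the same route as the paper: for the $\iota$-diagram you stack the translates $f^{jm}\circ\widetilde{H}_m$ and identify the resulting edge with $\gamma_{m,n}(\theta)\ast n(\omega)$ via \eqref{ex-gamma}, and for the $f$-diagram you concatenate $f\circ(\text{lift})$ with a square over $\omega^{-1}$, exactly as the paper does (the paper reuses the square $\widetilde{H}$ from Proposition \ref{prop-fw}, whereas you build $K$ by hand and then invoke Lemma \ref{lemma-fw}(ii) — the same identity that underlies the paper's computation of $[f^{\omega}](\theta)$ there).
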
 
\begin{proof}
Given $\mathcal{F}_n \in \mathcal{N}_{B}(f^{n}) $ and $x \in \mathcal{F}_n$ we can choose a path $c: I \to E$ from $x$ to $x_0.$ 
From \cite[I. 7. 16]{Whi} we can lift the homotopy $H_1: I \times I \to B$ defined by $H_1(t,s) = p(c(t))$ to a homotopy $\widetilde{H}_1: I \times I \to E$ such that
$$\widetilde{H}_1(t,0) = c(t), \,\,\,\, \widetilde{H}_1(0,s) = c_x,  \,\,\,\,  \widetilde{H}_1(t,1) = f^{n}(c(t)) $$  
for all $t,s \in I,$ where $c_x$ denotes the constant path in $x.$ By definition given in the Proposition \eqref{prop-phi} we have
\begin{equation} \label{eq-propmn}
\phi(\mathcal{F}_n) = [\theta] = [\widetilde{H}_1(1,s) \ast n(\omega)^{-1}] \in \mathcal{R}_{B}(f^{n}; x_0, n(\omega)).
\end{equation}
We denote $\widetilde{H}_2 = f \circ \widetilde{H}_1$ and consider the concatenation of homotopies $\widetilde{H}_3 = \widetilde{H}_2 \ast \widetilde{H},$ where $\widetilde{H}$ is given in the proof of Proposition \eqref{prop-fw}. We have; 
$\widetilde{H}_3(t,0) = f(c(t)) \ast \omega^{-1}(t), \,\,\,\, \widetilde{H}_1(0,s) = c_{f(x)},  \,\,\,\,  \widetilde{H}_3(t,1) = f^{n}(f(c(t)) \ast \omega^{-1}(t)) \,\,\, \textrm{and} \,\,\,\,\widetilde{H}_1(0,s) = [f^{\omega}(\theta)], $ like in the diagram below.
\begin{center}
\setlength{\unitlength}{0,8cm}
\begin{picture}(9.5,3.3)
\thicklines
\put(1.5,0.5){\line(1,0){6}}
\put(1.5,2.5){\line(1,0){6}}
\put(1.5,0.5){\line(0,1){2}}
\put(4.5,0.5){\line(0,1){2}}
\put(7.5,0.5){\line(0,1){2}}
\put(2.8,0){$f(c)$}
\put(2.4,2.8){$f^{n}(f(c))$}
\put(5.7,0){$\omega^{-1}$}
\put(5.4,2.8){$f^{n}(\omega^{-1})$}
\put(0.1,1.4){$c_{f(x)}$}
\put(4.6,1.4){$f(\theta \ast n(\omega))$}
\put(7.6,1.4){$f^{\omega}(\theta) \ast n(\omega)$}
\end{picture}
\end{center}

This implies that $(f(x), c_{f(x)})$ and $(x_0, f^{\omega}(\theta) \ast n(\omega) )$ belong to the same path component of $\pi_0(E_B(f^{n})).$  Therefore, $\phi \circ f(\mathcal{F}_n) = [f^{\omega}] \circ \phi(\mathcal{F}_n).$  

For the second diagram we take $\mathcal{F}_m \in \mathcal{N}_{B}(f^{m}), $  $x \in \mathcal{F}_m$ and a path $c: I \to E$ from $x$ to $x_0.$ Let 
$$
\phi(\mathcal{F}_m) = [\theta] = [\widetilde{H}(1,s) \ast m(\omega)^{-1}] \in \mathcal{R}_{B}(f^{m}; x_0, m(\omega)),
$$
as in Equation \eqref{eq-propmn}. For each $j$ we have $f^{jm}(x) = x.$ 
We define the homotopies $H_{k} = f^{km} \circ \widetilde{H},$ for $k = 0, \cdots , \frac{n}{m} - 1.$ From Equation \eqref{ex-gamma} the concatenation $$\overline{H} = H_0 \ast H_1 \ast \cdots \ast H_{\frac{n}{m} - 1}$$ satisfies;  
$\overline{H}(t,0) = c(t), \,\,\,\,  \overline{H}(t,1) = f^{n}(c(t)), \,\,\,\, \overline{H}(0,s) = c_x(s),  \,\,\,\, \overline{H}(1,s) = \gamma_{m,n}(\theta(s)) \ast n(\omega)(s),$ for all $t,s \in I.$ The homotopy $\overline{H}$ is represented in the picture below, where each edge label represents the image of that edge by $\overline{H}$ in $E.$
\begin{center}
\setlength{\unitlength}{0,8cm}
\begin{picture}(16,3.6)
\thicklines
\put(0,0.5){\line(1,0){16}}
\put(0,2.5){\line(1,0){16}}
\put(0,0.5){\line(0,1){2}}
\put(3.5,0.5){\line(0,1){2}}
\put(8,0.5){\line(0,1){2}}
\put(12,0.5){\line(0,1){2}}
\put(16,0.5){\line(0,1){2}}
\put(0.8,-0.2){$\theta \ast m(\omega)$}
\put(1.8,2.8){$c_x$}
\put(4.4,-0.2){$f^{m}(\theta \ast m(\omega))$}
\put(5.4,2.8){$c_x$}
\put(9.7,0){$\cdots$}
\put(9.7,1.5){$\cdots$}
\put(9.7,2.8){$\cdots$}
\put(12.1,-0.2){$f^{n-m}(\theta \ast m(\omega))$}
\put(13.1,2.8){$c_x$}
\put(-0.6,1.4){$c$}
\put(1.9,1.4){$f^{m}(c)$}
\put(6.2,1.4){$f^{2m}(c)$}
\put(12.3,1.4){$f^{n-m}(c)$}
\put(16.3,1.4){$f^{n}(c)$}
\end{picture}
\end{center}

\

Therefore, $(x, c_x)$ and $(x_0, \gamma_{m,n}(\theta) \ast n(\omega))$ belong to the same path component of $\pi_0(E_B(f^{n})).$ This implies $\phi \circ \iota(\mathcal{F}_m) = [\gamma_{m,n}] \circ \phi(\mathcal{F}_m).$
\end{proof}

\begin{definition}
The {\it index} of an element $[\alpha]_{n}  \in \mathcal{R}_{B}(f^{n}; x_0, n(\omega))$ is the index of $\mathcal{F}_n$ if 
$\phi(\mathcal{F}_n) = [\alpha]_{n}$ for some $\mathcal{F}_n \in \mathcal{N}_{B}(f^{n}),$ or $0$ otherwise. The elements $[\alpha]_{n}  \in \mathcal{R}_{B}(f^{n}; x_0, n(\omega))$ with non-zero index are said to be 
{\it essential}, otherwise they are called {\it inessential}.
\end{definition}

\section{A Nielsen-type periodic number}

\begin{definition}
Let $O_n(f)$ be the number of irreducible, essential periodic points orbits of $\mathcal{R}_{B}(f^{n}; x_0, n(\omega)).$
The Nielsen type number over $B$ of period $n$ is defined by the formula;
$$N_B P_n(f) = n \times O_n(f).$$
\end{definition}

\begin{proposition}
The following properties of $N_B P_n(f)$ hold true:

\begin{enumerate}[(i)]

\item $N_B P_n(f)$ is a homotopy invariant over $B.$

\item $N_B P_n(f) \leq  MCP_n(f) := min\{\# \pi_0(P_n(g))|  g \sim_B f \}.$

\end{enumerate}

\end{proposition}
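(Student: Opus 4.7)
The plan is to handle (i) first and then deduce (ii) from it together with the counting machinery built up in Section~3.

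For (i), the starting point is a fiber homotopy $H_t: E \to E$ over $B$ from $f$ to $g$. Iterating $H_t$ produces, for each $n \geq 1$, a canonical path in $Y$ from $f^n(x_0)$ to $g^n(x_0)$ obtained by splicing successive applications of $H$ through the iterates $f^i$ and $g^{n-1-i}$; concatenating $\omega$ with the $n=1$ case yields a path $\omega_g$ from $x_0$ to $g(x_0)$ inside $Y$. A two-parameter lift of the constant homotopy $t \mapsto H_t(x_0)$ in $B$, exactly in the spirit of Lemma~\ref{bij-x0-x1}, furnishes a bijection
$$\Psi_n: \mathcal{R}_{B}(f^{n}; x_0, n(\omega)) \longrightarrow \mathcal{R}_{B}(g^{n}; x_0, n(\omega_g)).$$
The main task is to verify that $\Psi_n$ is natural with respect to the structure these Reidemeister sets carry: it must intertwine $[f^\omega]$ with $[g^{\omega_g}]$ and $[\gamma_{m,n}^f]$ with $[\gamma_{m,n}^g]$. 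Both checks amount to concatenating the defining two-parameter lifts and reading off the boundary, following the diagrammatic conventions of Section~2. Naturality gives a bijection between orbits preserving length, and between reducible classes preserving depth, so irreducibility is preserved. Finally, since the index over $B$ of \cite[Section~5]{G-K-09} is a homotopy-over-$B$ invariant, essentiality is preserved. Hence $O_n(f) = O_n(g)$ and therefore $N_B P_n(f) = N_B P_n(g)$.

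For (ii), fix any $g \sim_B f$. By (i) it suffices to exhibit at least $n \cdot O_n(g)$ distinct elements of $\pi_0(P_n(g))$. Let $\langle \mathcal{F}_n \rangle$ be an irreducible essential orbit of length $l$ for $g^n$. Irreducibility forces every $x \in \mathcal{F}_n$ to have least period exactly $n$: otherwise $g^k(x) = x$ for some proper divisor $k < n$ of $n$, and then the $\mathcal{N}_B(g^k)$-class of $x$ would be contained in $\mathcal{F}_n$ via $\iota$, contradicting $d(\mathcal{F}_n) = n$. Hence $\mathcal{F}_n \subset P_n(g)$, and similarly for every class in the orbit. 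By the proposition in Section~3, $\mathcal{F}_n$ contains at least $n/l$ points lying in pairwise distinct path components of $Fix(g^n)$, hence of the subspace $P_n(g)$.

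The $l$ classes in the orbit are pairwise disjoint, and distinct orbits are disjoint unions of disjoint classes. Moreover, two points in distinct Nielsen classes over $B$ for $g^n$ lie in different path components of $P_n(g)$: any path $\lambda$ in $P_n(g) \subset Fix(g^n)$ joining them, together with the constant-in-$s$ homotopy $H(t,s) = \lambda(t)$, realises the Nielsen-over-$B$ equivalence of Definition~\ref{nielsen-b} applied to $g^n$. Therefore each irreducible essential orbit of length $l$ contributes at least $l \cdot (n/l) = n$ distinct components of $P_n(g)$, and summing over the $O_n(g)$ orbits yields the required $n \cdot O_n(g)$ components. The principal obstacle lies in part~(i): the honest verification that $\Psi_n$ commutes with $[f^\omega]$ and $[\gamma_{m,n}]$ is bookkeeping-heavy, though conceptually routine given the homotopy-square techniques developed earlier in the paper.
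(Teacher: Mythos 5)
Your proposal is correct and follows essentially the same route as the paper's: for (i) the paper likewise forms the path $\nu=\omega\ast H(x_0,-)$, checks $n(\nu)\simeq n(\omega)\ast H^{n}(x_0,-)$ in the fiber, and invokes the induced fiber homotopy equivalence $E_B(f^{n})\to E_B(g^{n})$ from \cite{K-06} to get a bijection $\Psi^{n}$ commuting with $[f^{\omega}]$ and $[\gamma_{m,n}]$ and preserving essentiality, asserting (as you do) rather than writing out the diagram chases. Your part (ii) is actually argued in more detail than the paper's terse version, which only notes that points of irreducible essential classes have least period $n$ and that distinct irreducible orbits share no periodic points; your count of $l\cdot(n/l)=n$ components of $P_n(g)$ per orbit, using the $d/l$-point proposition of Section~3 and the observation that a path in $P_n(g)$ realises Nielsen equivalence over $B$, supplies exactly the bookkeeping the paper leaves implicit.
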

\begin{proof} (i) \,
Let $H: E \times I \to E$ be a homotopy, over $B,$ such that $H(x,0) = f(x)$ and $H(x,1) = g(x).$ We consider the path $\nu(t) = \omega(t) \ast H(x_0, t).$ Since $p(H(x_0, t)) = p(x_0)$ then $\nu$ and $\omega$ belong the same fiber, that is, $p^{-1}(p(x_0)).$ By induction defines $H^{n}(x,t) = H^{n-1}(H(x,t),t)$ for $n \geq 2.$ The homotopy $H^{n}$ between $f^{n}$ and $g^{n}$ is also over $B.$

The path $n(\nu)$ is homotopic to the path $n(w) \ast H^{n}(x_0, -)$ by a homotopy whose image is in the fiber $p^{-1}(p(x_0)).$ In fact, we consider the homotopy $G: I \times I \to E$ defined by $G(t,s) = H^{n-1}(H(x_0,t),s).$ We have that $H^{n}(x_0,t) = G(t,t)$ is homotopic to 
$G(0,s) \ast G(t,1)=$ $H^{n-1}(f(x_0),s) \ast g^{n-1} \circ H(x_0,t).$ 
Now we suppose that $(n-1)\nu$ is homotopic to $(n-1)\omega \ast H^{n-1}(x_0,t).$ We have $n\nu = (n-1)\nu \ast g^{n-1}(\nu),$ thus $n\nu$
is homotopic to $(n-1)\omega \ast H^{n-1}(x_0,t) \ast g^{n-1}(\omega) \ast g^{n-1} \circ H(x_0,t)=$ $n\omega \ast ( f^{n-1}(\omega) \ast H^{n-1}(x_0,t) \ast g^{n-1}(\omega)) \ast g^{n-1} \circ H(x_0,t).$ Using $H^{n-1}(x,t),$ is not difficult to see that 
$( f^{n-1}(\omega) \ast H^{n-1}(x_0,t) \ast g^{n-1}(\omega))$ is homotopic to $H^{n-1}(f(x_0),t).$ Therefore $n\nu$ is homotopic to $n\omega \ast H^{n-1}(f(x_0),t) \ast  g^{n-1} \circ H(x_0,t)$ which is homotopic to $n\omega \ast H^{n}(x_0,t).$ Now the statement follows by induction. 

From \cite[Page 8]{K-06} $H^{n}$ induces a fiber homotopy equivalence
$$ E_B(f^{n}) \stackrel{H^{n}_B}{\longrightarrow}  E_B(g^{n}) $$
defined by $H^{n}_B(x_0, \theta) = (x_0, \theta \ast H^{n}(x_0,t)),$ which preserves the essentiality of the Reidemeister classes. From this map and the map defined in Theorem \ref{th-rb}, we obtain a bijection $\Psi^{n}: \mathcal{R}_{B}(f^{n}; x_0, n(\omega)) \to \mathcal{R}_{B}(g^{n}; x_0, n(\nu)),$  which preserves the essentiality of the Reidemeister classes and such that the two diagrams
$$
\xymatrix{
	\mathcal{R}_{B}(f^{n}; x_0, n(\omega)) \ar[r]^-{\Psi^{n}} \ar[d]_-{[f^{\omega}]} & \mathcal{R}_{B}(g^{n}; x_0, n(\nu)) \ar[d]^-{[g^{\omega}]} \\
	\mathcal{R}_{B}(f^{n}; x_0, n(\omega)) \ar[r]^-{\Psi^{n}} & \mathcal{R}_{B}(g^{n}; x_0, n(\nu)),
} \hspace{2cm}
\xymatrix{
	\mathcal{R}_{B}(f^{m}; x_0, m(\omega)) \ar[r]^-{\Psi^{m}} \ar[d]_-{[\gamma_{m,n}]} & \mathcal{R}_{B}(g^{m}; x_0, m(\nu)) \ar[d]^-{[\gamma_{m,n}]} \\
	\mathcal{R}_{B}(f^{n}; x_0, n(\omega)) \ar[r]^-{\Psi^{n}} & \mathcal{R}_{B}(g^{n}; x_0, n(\nu)).
}
$$
are commutative. Therefore $N_B P_n(g) = N_B P_n(f).$

(ii) \, If $[\alpha]_{n}  \in \mathcal{R}_{B}(f^{n}; x_0, n(\omega))$ is an irreducible and essential class then there is an essential class $\mathcal{F}_n$ such that$\phi(\mathcal{F}_n) = [\alpha]_{n}.$ If $x \in \mathcal{F}_n$ then $f^{n}(x) = x$ and $f^{m}(x) \neq x$ for each positive divisor $m$ of $n$ which is different of $n.$ Thus different irreducible $f$-orbit of $\mathcal{F}_n$ do not have common periodic points. Now the statement follows from $(i)$ and Theorem \ref{th-rb}.  
\end{proof}

\begin{corollary}
 $\displaystyle  \sum_{m | n} N_B P_m(f) \leq MCF_n(f).$
\end{corollary}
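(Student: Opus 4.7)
The proof should be an immediate consequence of part (ii) of the preceding proposition, summed over the divisors of $n$, together with the set-theoretic decomposition $Fix(g^{n}) = \bigsqcup_{m\mid n} P_m(g)$. The plan is as follows.

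First I would fix an arbitrary $g \sim_B f$. Part (i) of the proposition gives $N_{B}P_m(f) = N_{B}P_m(g)$ for every $m$, and part (ii) applied to $g$ gives $N_{B}P_m(g) \leq \#\pi_0(P_m(g))$. Summing over $m \mid n$ yields
$$\sum_{m\mid n} N_{B}P_m(f) \;\leq\; \sum_{m\mid n} \#\pi_0(P_m(g))$$
for every such $g$. Taking minimum over $g$ on the right (if one can) would then produce the desired upper bound $MCF_{n}(f)$ on the left.

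Next I would compare $\sum_{m\mid n}\#\pi_0(P_m(g))$ with $\#\pi_0(Fix(g^{n}))$. Because the sets $P_m(g)$ are pairwise disjoint and exhaust $Fix(g^{n})$, the inclusions $P_m(g) \hookrightarrow Fix(g^{n})$ jointly induce a map $\bigsqcup_{m\mid n}\pi_0(P_m(g)) \to \pi_0(Fix(g^{n}))$; if this map is injective the inequality $\sum_{m\mid n}\#\pi_0(P_m(g)) \leq \#\pi_0(Fix(g^{n}))$ follows at once, and minimising over $g \sim_B f$ finishes the proof.

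The main obstacle is precisely this injectivity. In general a path in $Fix(g^{n})$ can run through periodic points of several distinct exact periods, so distinct components of the $P_m(g)$'s could merge inside $Fix(g^{n})$. The standard remedy in the compact manifold setting assumed throughout the paper is a transversality argument: one homotopes $g$ over $B$ to a representative for which all fixed points of $g^{n}$ are isolated, in which case every path component is a single point and the merging issue disappears. For such a representative the comparison is the trivial equality $\#Fix(g^{n}) = \sum_{m\mid n}\#P_m(g)$, and the corollary follows by then passing to the minimum realising $MCF_{n}(f)$. Everything else in the argument is a formal assembly of the divisor decomposition with part (ii) of the proposition.
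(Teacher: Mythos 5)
Your first two steps --- combining parts (i) and (ii) of the preceding proposition with the decomposition $Fix(g^{n})=\bigsqcup_{m\mid n}P_m(g)$ --- are exactly the paper's proof, which consists of the single sentence that the sets $P_m(g)$, $m\mid n$, partition $Fix(g^{n})$. The difficulty you then isolate is genuine: a partition only gives a natural map $\bigsqcup_{m\mid n}\pi_0(P_m(g))\to\pi_0(Fix(g^{n}))$ with no reason to be injective, so $\sum_{m\mid n}\#\pi_0(P_m(g))$ can a priori exceed $\#\pi_0(Fix(g^{n}))$, and the paper's one-line proof does not address this point at all. (The quantity $MCF_n(f)$ is never defined in the paper; by analogy with $MCP_n(f)$ it must mean $\min\{\#\pi_0(Fix(g^{n}))\mid g\sim_B f\}$, so the merging issue is exactly the missing step.)

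However, your transversality patch does not close the gap. Since $MCF_n(f)$ is a minimum over \emph{all} $g\sim_B f$, the inequality $\sum_{m\mid n}N_BP_m(f)\le\#\pi_0(Fix(g^{n}))$ has to be verified for a $g$ attaining that minimum, and such a $g$ need not have isolated fixed points. Replacing it by a fiberwise homotopic $g'$ for which $Fix((g')^{n})$ is finite gives you $\sum_{m\mid n}N_BP_m(f)\le\#Fix((g')^{n})$, but there is no inequality $\#Fix((g')^{n})\le\#\pi_0(Fix(g^{n}))$ --- splitting a positive-dimensional component of $Fix(g^{n})$ into isolated points typically increases the count. So your argument only bounds the minimum over transverse representatives, which sits on the wrong side of $MCF_n(f)$. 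A repair consistent with the paper's framework is to count through Nielsen classes rather than through the $P_m(g)$: by Proposition \ref{prop-fix} every path component of $Fix(g^{n})$ lies in a single Nielsen class of $g^{n}$ over $B$, hence $\#\pi_0(Fix(g^{n}))=\sum_{\mathcal{F}}\#\pi_0(\mathcal{F})$, and one then shows that each essential irreducible orbit at level $m$ accounts for at least $m$ distinct path components of $Fix(g^{n})$, with distinct orbits accounting for disjoint families; this is the argument already sketched in the proof of part (ii), but it must be carried out inside $Fix(g^{n})$ rather than inside $P_m(g)$.
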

\begin{proof}
Just see that the set $\{P_m(f); \,\, m | n \}$ defines a partition of $Fix(f^{n}).$
\end{proof}


\section{Some computations of $N_BP_n(f)$}

Motivated by the classic case, in this section, we will relate $N_BP_n(f)$ with the Nielsen number over $B$ in some particular cases.

\begin{definition}
A map $f: E \to E$ is said to be $n$-toral over $B$ if the following two  conditions hold:

\
 
(i) For every $m | n$ and each  $[\alpha]_{m}  \in \mathcal{R}_{B}(f^{m}; x_0, m(\omega)),$ $d(<[\alpha]_{m}>) = l(<[\alpha]_{m}>);$

\

(ii)  For all $m | n,$ $[\gamma_{m,n}]$ is injective.
\end{definition}

\begin{definition}
Let $f: E \to E$ be a fiber-preserving map over $B.$  We define, by induction:
$$A_{1}(f)=N_B(f) \ \ and \ \ A_{n}(f) = N_B(f^{n}) - \sum_{k|n, \ k<n} A_{k}(f).$$ 	
\end{definition}

\begin{theorem} \label{th-nbpn}
If $f: E \to E$ is an $n$-toral map over $B$ such that for every $m | n;$ $0 \neq N_B(f^{m}) = R_B(f^{m})$ then $$A_n(f) = N_BP_n(f).$$
\end{theorem}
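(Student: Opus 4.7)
The plan is to establish, for every divisor $m$ of $n$, the identity
$$N_B(f^{m}) \;=\; \sum_{k \mid m} N_BP_k(f),$$
and then compare with the recursion $N_B(f^{m})=\sum_{k\mid m}A_k(f)$ that defines $A_m(f)$. A straightforward induction on divisors will force $A_m(f)=N_BP_m(f)$, giving the result at $m=n$. Note first that the hypotheses propagate: $n$-torality of $f$ implies $m$-torality for every $m\mid n$, because injectivity of $[\gamma_{k,n}]=[\gamma_{m,n}]\circ[\gamma_{k,m}]$ forces injectivity of $[\gamma_{k,m}]$, and the depth-equals-length condition is inherited automatically. Moreover, $N_B(f^{m})=R_B(f^{m})$ says that every Reidemeister class over $B$ at level $m$ is essential, so ``irreducible essential orbit'' simplifies to ``irreducible orbit'' throughout.

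The main step is to partition $\mathcal{R}_{B}(f^{n};x_0,n(\omega))$ by algebraic depth and count each stratum using the operator $[\gamma_{m,n}]$. The plan is to verify that, for each $m\mid n$, the map $[\gamma_{m,n}]$ restricts to a bijection between the irreducible classes in $\mathcal{R}_{B}(f^{m};x_0,m(\omega))$ and the depth-$m$ classes in $\mathcal{R}_{B}(f^{n};x_0,n(\omega))$. Injectivity is immediate from torality condition (ii). For the stratum image, if $[\beta]_m$ is irreducible then torality condition (i) makes its orbit of length $m$; the commutativity $[\gamma_{m,n}]\circ[f^{\omega}]=[f^{\omega}]\circ[\gamma_{m,n}]$ (Lemma \ref{lemma-fwgamma}) together with injectivity of $[\gamma_{m,n}]$ then shows that $[\gamma_{m,n}]([\beta]_m)$ also has orbit length $m$, hence depth $m$ by torality at level $n$. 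Conversely a depth-$m$ class in $\mathcal{R}_{B}(f^{n};x_0,n(\omega))$ is by definition the image of some $[\beta]_m$; were that preimage reducible to some $k\mid m$ with $k<m$, the transitivity $[\gamma_{m,n}]\circ[\gamma_{k,m}]=[\gamma_{k,n}]$ and injectivity of $[\gamma_{m,n}]$ would make the original class reducible to $k<m$, contradicting its depth.

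Given the bijection, I count irreducible classes in $\mathcal{R}_{B}(f^{m};x_0,m(\omega))$ orbit-by-orbit: by torality each irreducible orbit has length $m$, hence contains exactly $m$ classes, and since every class is essential, the total number of irreducible classes equals $m\cdot O_m(f)=N_BP_m(f)$. Summing the strata over divisors of $n$ and using $N_B(f^{n})=R_B(f^{n})$ produces the displayed identity. Möbius-type induction over the divisor lattice of $n$ (with base case $A_1(f)=N_B(f)=N_BP_1(f)$) then yields $A_m(f)=N_BP_m(f)$ for every $m\mid n$, and in particular $A_n(f)=N_BP_n(f)$. The delicate part of the argument is the ``depth equals $m$'' half of the bijection, where one must rule out reducibility to a divisor of $n$ that does not divide $m$; here the key is that, under torality, depth coincides with length, and the orbit-length computation $l=m$ for $[\gamma_{m,n}]([\beta]_m)$ directly determines the depth without any ambient divisibility issue.
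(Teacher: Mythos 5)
Your argument is correct and follows essentially the same route as the paper's: partition $\mathcal{R}_{B}(f^{n};x_0,n(\omega))$ by algebraic depth, use injectivity of $[\gamma_{m,n}]$ together with the depth-equals-length hypothesis to identify the depth-$m$ stratum with the $N_BP_m(f)$ irreducible (automatically essential) classes at level $m$, and then run the divisor-lattice induction against the defining recursion for $A_n(f)$. You are in fact somewhat more careful than the printed proof, e.g.\ in deducing injectivity of $[\gamma_{k,m}]$ from that of $[\gamma_{k,n}]$ and in verifying that the image of an irreducible level-$m$ class has depth exactly $m$.
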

\begin{proof}
Since $f$ is $n$-toral, $N_BP_n(f)$ is the cardinality of the irreducible and essential Reidemeister classes of $f^{n}.$  For every $m | n,$ since $0 \neq N_B(f^{m}) = R_B(f^{m})$, then all Reidemeister classes of $f^{n}$ are essential. 

Given $[\alpha]_{n}  \in \mathcal{R}_{B}(f^{n}; x_0, n(\omega))$, we suppose that $d([\alpha]_{n}) = m < n.$ This implies that $[\alpha]_n$ is reducible to $m.$ Thus, there exist $[\beta]_{m} \in \mathcal{R}_{B}(f^{m}; x_0, m(\omega))$ such that $[\gamma_{m,n}]([\beta]_{m}) = [\alpha]_{n}.$ Since $[\gamma_{m,n}]$ is injective then $\# Im([\gamma_{m,n}]) = N_B(f^{m}).$ By definition of the depth, $[\beta]_{m}$ is essential and irreducible in $\mathcal{R}_{B}(f^{m}; x_0, m(\omega)).$ Therefore the cardinality of Reidemeister classes of $f^{n}$ with depth $m < n$ is precisely; $ \displaystyle N_B(f^{m}) - \sum_{j|n, \ j<n} A_{j}(f) = A_m(f).$ 

Let $ 1 = k_1 < k_2 < k_3 < \cdots < k_{q} < n $ be the positive divisors of $n.$ Using the same argument above, then the cardinality of irreducible and essential Reidemeister classes of $f^{n}$ will be $N_B(f^{n}) - (A_{k_1} + A_{k_2} + \cdots + A_{k_q}),$ that is,  
$$N_BP_n(f) = N_B(f^{n}) - \sum_{k|n, \ k<n} A_{k}(f)  = A_n(f).$$
\end{proof}

The next result gives an explicit expression of $A_n(f)$ in terms of $N_B(f^{k}).$

\begin{theorem} \label{theorem-An}
Let $f: E \to E$ be a map over $B$ and $n \in \mathbb{N}.$ Let $n = p_1^{\alpha_1}p_2^{\alpha_2}\cdots p_t^{\alpha_t}$ be its prime factorization, where $t \geq 1,$ every $p_i$ prime and every $\alpha_i \geq 1.$ Then
$$ A_n(f) = \displaystyle \sum_{\substack{\alpha_{j}-1 \leq k_j \leq \alpha_j \\ 1  \leq j \leq t}}
(-1)^{(\alpha_1+\alpha_2+\cdots+\alpha_t)-(k_1+k_2+\cdots+k_t)} N_B(f^{p_1^{k_1}p_2^{k_2}\cdots p_t^{k_t}}).$$	
\end{theorem}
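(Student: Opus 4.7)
The plan is to recognise this as a standard Möbius inversion on the divisor lattice of $n$, and then to identify which divisors contribute and what their signs are.

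First, I would unravel the inductive definition of $A_n(f)$. By definition, $A_n(f) + \sum_{k \mid n,\, k<n} A_k(f) = N_B(f^n)$, so a trivial induction on the number of divisors of $n$ gives the identity
\begin{equation*}
N_B(f^n) \;=\; \sum_{k \mid n} A_k(f).
\end{equation*}
This is exactly the hypothesis of the Möbius inversion formula on the poset of positive integers under divisibility.

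Second, I would apply Möbius inversion to obtain
\begin{equation*}
A_n(f) \;=\; \sum_{k \mid n} \mu\!\left(\tfrac{n}{k}\right) N_B(f^k),
\end{equation*}
where $\mu$ is the classical number-theoretic Möbius function. Recall $\mu(d)=0$ unless $d$ is squarefree, in which case $\mu(d)=(-1)^{\omega(d)}$, with $\omega(d)$ the number of distinct prime factors of $d$.

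Third, I would translate this indexing in terms of the prime factorisation $n = p_1^{\alpha_1}\cdots p_t^{\alpha_t}$. Every divisor $k \mid n$ is uniquely of the form $k = p_1^{k_1}\cdots p_t^{k_t}$ with $0 \le k_j \le \alpha_j$, and then $n/k = p_1^{\alpha_1-k_1}\cdots p_t^{\alpha_t-k_t}$. The factor $\mu(n/k)$ is nonzero exactly when each $\alpha_j - k_j \in \{0,1\}$, that is, exactly when $\alpha_j - 1 \le k_j \le \alpha_j$ for every $j$. In that case $\omega(n/k) = \sum_{j=1}^{t}(\alpha_j - k_j)$, so
\begin{equation*}
\mu\!\left(\tfrac{n}{k}\right) \;=\; (-1)^{(\alpha_1+\cdots+\alpha_t)-(k_1+\cdots+k_t)}.
\end{equation*}
Substituting into the Möbius-inverted formula yields precisely the expression claimed in the theorem.

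The only real content is step one, the identity $N_B(f^n)=\sum_{k\mid n} A_k(f)$, and this is a direct unfolding of the recursion; everything else is bookkeeping with $\mu$. I do not expect any genuine obstacle, so the proof should be short; the main care is simply to keep the indexing of $k_j$ versus $\alpha_j - k_j$ consistent throughout.
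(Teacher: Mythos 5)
Your proposal is correct, and it is a genuinely different (and cleaner) route than the one in the paper. You observe that the defining recursion $A_{n}(f) = N_{B}(f^{n}) - \sum_{k\mid n,\,k<n} A_{k}(f)$ rearranges immediately to $N_{B}(f^{n}) = \sum_{k\mid n} A_{k}(f)$ (no induction is really needed here, just moving the sum to the other side), and then you invoke classical M\"obius inversion over the divisor lattice, finishing by the standard identification of the nonvanishing values of $\mu(n/k)$ with the divisors $k = p_1^{k_1}\cdots p_t^{k_t}$ satisfying $\alpha_j - 1 \le k_j \le \alpha_j$; your sign bookkeeping $\mu(n/k) = (-1)^{\sum_j(\alpha_j-k_j)}$ is right. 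The paper instead re-derives this instance of M\"obius inversion by hand: it forms the difference $N_{B}(f^{p_1^{\alpha_1}\cdots p_t^{\alpha_t}}) - N_{B}(f^{p_1^{\alpha_1-1}p_2^{\alpha_2}\cdots p_t^{\alpha_t}})$ to isolate the $A$-terms whose first exponent equals $\alpha_1$, and then inductively peels off one prime at a time via repeated subtraction until only $A_n(f)$ remains. Your approach buys brevity and places the result in its standard number-theoretic context; the paper's approach is self-contained and makes the telescoping mechanism explicit, but is essentially a specialized proof of the inversion formula you quote. Either is acceptable; if you write yours up, you should state the M\"obius inversion formula you are using (or give the one-line rearrangement plus a reference), since that is the only substantive input.
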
	
\begin{proof}
	We have
	\begin{equation} \label{eq-ind-1}
		\begin{array}{lll}
			N_B(f^{p_1^{\alpha_1}p_2^{\alpha_2}\cdots p_t^{\alpha_t}}) - N_B(f^{p_1^{\alpha_1-1}p_2^{\alpha_2}\cdots p_t^{\alpha_t}}) & = & 
			\displaystyle \sum_{\substack{0 \leq i_j \leq \alpha_j \\
					1  \leq j \leq t}}A_{p_1^{i_1}p_2^{i_2}\cdots p_t^{i_t}}(f) -  \displaystyle\sum_{\substack{0 \leq l_j \leq \alpha_j, j \neq 1 \\ 0 \leq l_1 \leq \alpha_1-1 \\ 1  \leq j \leq t}} A_{p_1^{l_1}p_2^{l_2}\cdots p_t^{l_t}}(f) \\
			& = & \displaystyle\sum_{\substack{0 \leq i_j \leq \alpha_j \\
					2  \leq j \leq t}}A_{p_1^{\alpha_1}p_2^{i_2}\cdots p_t^{i_t}}(f). \\
		\end{array} 
	\end{equation}
	Replacing $\alpha_2$ by $\alpha_2-1$ in Equation \eqref{eq-ind-1} we obtain:
	\begin{equation} \label{eq-ind-2}
		\begin{array}{lll}
			N_B(f^{p_1^{\alpha_1}p_2^{\alpha_2-1}\cdots p_t^{\alpha_t}}) - N_B(f^{p_1^{\alpha_1-1}p_2^{\alpha_2-1}\cdots p_t^{\alpha_t}}) & = &
			\displaystyle\sum_{\substack{0 \leq i_j \leq \alpha_j, \,\, j \neq 2 \\ 0 \leq i_2 \leq \alpha_2-1}}A_{p_1^{\alpha_1}p_2^{i_2}\cdots p_t^{i_t}}(f). \\
		\end{array} 
	\end{equation}
	Subtracting Equation \eqref{eq-ind-2} from Equation \eqref{eq-ind-1} we get: 
	\begin{equation} \label{eq-ind-3}
		\begin{array}{lll}
			\displaystyle\sum_{\substack{0 \leq i_j \leq \alpha_j \\ 3 \leq j \leq t}}A_{p_1^{\alpha_1}p_2^{\alpha_2}p_3^{i_3}\cdots p_t^{i_t}}(f)
			& = & \displaystyle \sum_{\substack{\alpha_{j}-1 \leq k_j \leq \alpha_j \\
					1  \leq j \leq 2}}
			(-1)^{(\alpha_1+\alpha_2)-(k_1+k_2)} N_B(f^{p_1^{k_1}p_2^{k_2}p_3^{\alpha_3}\cdots p_t^{\alpha_t}}) \\
		\end{array} 
	\end{equation}
	We suppose that;
	\begin{equation} \label{eq-ind-4}
		\begin{array}{lll}
			\displaystyle\sum_{\substack{0 \leq i_j \leq \alpha_j \\ l+1 \leq j \leq t}}A_{p_1^{\alpha_1}p_2^{\alpha_2}\cdots p_l^{\alpha_l} p_{l+1}^{i_{l+1}}\cdots p_t^{i_t}}(f)
			& = & \displaystyle \sum_{\substack{\alpha_{j}-1 \leq k_j \leq \alpha_j \\
					1  \leq j \leq l}}
			(-1)^{(\alpha_1+\alpha_2+\cdots+\alpha_l)-(k_1+k_2+\cdots+k_l)} N_B(f^{p_1^{k_1}p_2^{k_2}\cdots p_l^{k_l} p_{l+1}^{\alpha_{l+1}}\cdots p_t^{\alpha_t}}) \\
		\end{array} 
	\end{equation}
	Replacing $\alpha_{l+1}$ by $\alpha_{l+1}-1$ in Equation \eqref{eq-ind-4} we obtain:
	\begin{equation} \label{eq-ind-5}
		\begin{array}{lll}
			\displaystyle\sum_{\substack{0 \leq i_j \leq \alpha_j, \\ j \neq l+1 \\ 0 \leq i_{l+1} \leq \alpha_{l+1}-1 \\ l+1 \leq j \leq t}}A_{p_1^{\alpha_1}p_2^{\alpha_2}\cdots p_l^{\alpha_l} p_{l+1}^{i_{l+1}}\cdots p_t^{i_t}}(f)
			& = & \displaystyle \sum_{\substack{\alpha_{j}-1 \leq k_j \leq \alpha_j \\
					1  \leq j \leq l}}
			(-1)^{(\alpha_1+\alpha_2+\cdots+\alpha_l)-(k_1+k_2+\cdots+k_l)} N_B(f^{p_1^{k_1}p_2^{k_2}\cdots p_l^{k_l} p_{l+1}^{\alpha_{l+1}-1}\cdots p_t^{\alpha_t}}) \\
		\end{array} 
	\end{equation}
	Subtracting Equation \eqref{eq-ind-5} from Equation \eqref{eq-ind-4} we get: 
	\begin{equation} \label{eq-ind-6}
		\begin{array}{lll}
			\displaystyle\sum_{\substack{0 \leq i_j \leq \alpha_j \\ l+2 \leq j \leq t}}A_{p_1^{\alpha_1}p_2^{\alpha_2}\cdots p_{l+1}^{\alpha_{l+1}} p_{l+2}^{i_{l+2}}\cdots p_t^{i_t}}(f)
			& = & \displaystyle \sum_{\substack{\alpha_{j}-1 \leq k_j \leq \alpha_j \\
					1  \leq j \leq l+1}}
			(-1)^{(\alpha_1+\alpha_2+\cdots+\alpha_{l+1})-(k_1+k_2+\cdots+k_{l+1})} N_B(f^{p_1^{k_1}p_2^{k_2}\cdots p_{l+1}^{k_{l+1}} p_{l+2}^{\alpha_{l+2}}\cdots p_t^{\alpha_t}}) \\
		\end{array} 
	\end{equation}
	Therefore, by induction the result follows.
\end{proof}


From the above results we have;

\begin{corollary} 
Let $f: E \to E$ be an $n$-toral map over $B$ such that for every $m | n;$ $0 \neq N_B(f^{m}) = R_B(f^{m}).$ Let ${\bf P}(n) = \{p(1), \cdots, p(k) \}$ be the set of all distinct primes which divides $n.$ Then,
$$N_BP_n(f) = \displaystyle \sum_{\tau \subseteq {\bf P}(n)}  (-1)^{\# \tau} N_B(f^{n \, : \, \tau}) $$
where $n \, : \, \tau = n \left( \displaystyle \prod_{p \in \tau} \, p  \right)^{-1}.$
\end{corollary}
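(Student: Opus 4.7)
The plan is to obtain the corollary as a direct combination of Theorem \ref{th-nbpn} with Theorem \ref{theorem-An}, plus a bijective reindexing of the summation. Since $f$ is $n$-toral and $0 \neq N_B(f^m) = R_B(f^m)$ for every $m\mid n$, Theorem \ref{th-nbpn} yields $N_B P_n(f) = A_n(f)$, so the task reduces to rewriting the formula of Theorem \ref{theorem-An} in the subset-of-primes form stated in the corollary.

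To perform the reindexing, I would write $n = p_1^{\alpha_1}\cdots p_t^{\alpha_t}$ and parametrize the tuples $(k_1,\ldots,k_t)$ with $\alpha_j-1 \le k_j \le \alpha_j$ by subsets $\tau \subseteq \mathbf{P}(n) = \{p_1,\ldots,p_t\}$: declare $p_j \in \tau$ if and only if $k_j = \alpha_j - 1$, and $p_j \notin \tau$ if $k_j = \alpha_j$. This is clearly a bijection between the indexing set of Theorem \ref{theorem-An} and the power set of $\mathbf{P}(n)$. Under this bijection one checks immediately that
\[
p_1^{k_1}p_2^{k_2}\cdots p_t^{k_t} = \frac{n}{\prod_{p\in\tau} p} = n : \tau,
\]
and that the sign exponent $(\alpha_1+\cdots+\alpha_t) - (k_1+\cdots+k_t)$ counts exactly the indices $j$ with $k_j = \alpha_j - 1$, which is $\#\tau$. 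Substituting these identifications into the formula of Theorem \ref{theorem-An} transforms it termwise into
\[
A_n(f) = \sum_{\tau \subseteq \mathbf{P}(n)} (-1)^{\#\tau} \, N_B(f^{n:\tau}).
\]

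Combining with $N_B P_n(f) = A_n(f)$ from Theorem \ref{th-nbpn} yields the claimed formula. No genuine obstacle arises: everything nontrivial has already been absorbed into Theorems \ref{th-nbpn} and \ref{theorem-An}; the only thing to be careful about is the correspondence between the two combinatorial indexings, in particular verifying that the sign and the exponent of $f$ match up, which is just the bookkeeping outlined above.
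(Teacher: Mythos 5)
Your proposal is correct and matches the paper's intent exactly: the paper derives this corollary directly from Theorem \ref{th-nbpn} and Theorem \ref{theorem-An} (it offers no further argument beyond ``from the above results''), and your reindexing of the tuples $(k_1,\ldots,k_t)$ by the subsets $\tau\subseteq\mathbf{P}(n)$ with the sign exponent equal to $\#\tau$ is precisely the bookkeeping that is needed.
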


Let $T$ be the torus considered as a trivial bundle with base and fiber $S^{1}.$ If for each $k|n$ the map $f^{k}: T \to T$ is not deformed to a fixed point free map over $S^{1},$ then we will prove that $f$ is $n$-toral.  

We consider $S^{1}$ as the unitary complex numbers subset, the torus $T=S^{1}\times S^{1}$, $p$ is the projection onto the second coordinate,  $S^{1}\to T \stackrel{p}{\to} S^{1}$  and $f:T\to T$ a map over $S^{1}$. So, $f$ is fiberwise homotopic to $f_{r,s}:T\to T$ defined by $f_{r,s}(x,y)=(x^{r}y^{s},y)$ for some $r,s \in\z,$ see \cite{G-L-V-V}. Moreover, given $n\in\z$, $n>1$, if we denote $\sigma(n,r)=\displaystyle{\sum_{i=0}^{n-1}r^{i}}$, then we have the following lemma:

\begin{lemma}
	
	If $f_{r,s}(x,y)=(x^{r}y^{s},y)$ then $f^{n}_{r,s}$ is given by $f^{n}_{r,s}(x,y)=\Big(x^{r^{n}}.y^{s.\sigma(n,r)},y\Big)$ for each $n\in \n.$ 
	
\end{lemma}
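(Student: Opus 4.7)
The plan is a straightforward induction on $n$. The base case $n=1$ is immediate: $\sigma(1,r) = r^0 = 1$, so $f^{1}_{r,s}(x,y) = (x^r y^s, y) = (x^{r^1} y^{s\cdot \sigma(1,r)}, y)$.

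For the inductive step, I would assume the formula $f^{n}_{r,s}(x,y) = \bigl(x^{r^n} y^{s\cdot \sigma(n,r)}, y\bigr)$ holds and then compute
\begin{equation*}
f^{n+1}_{r,s}(x,y) = f_{r,s}\bigl(f^{n}_{r,s}(x,y)\bigr) = f_{r,s}\bigl(x^{r^n} y^{s\cdot \sigma(n,r)},\, y\bigr) = \bigl((x^{r^n} y^{s\cdot \sigma(n,r)})^{r} y^{s},\, y\bigr).
\end{equation*}
Expanding the first coordinate gives $x^{r^{n+1}} y^{rs\cdot \sigma(n,r) + s} = x^{r^{n+1}} y^{s(r\sigma(n,r) + 1)}$.

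The only remaining point is the combinatorial identity $r\cdot \sigma(n,r) + 1 = \sigma(n+1, r)$, which I would verify directly from the definition:
\begin{equation*}
r\cdot \sigma(n,r) + 1 = r\sum_{i=0}^{n-1} r^{i} + 1 = \sum_{i=1}^{n} r^{i} + r^{0} = \sum_{i=0}^{n} r^{i} = \sigma(n+1, r).
\end{equation*}
This closes the induction and gives the desired formula.

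There is no genuine obstacle here; the statement is a direct iteration of the defining formula for $f_{r,s}$, and the only subtle point is recognizing that the exponent of $y$ accumulates as a geometric-type sum governed by $\sigma(n,r)$. The computation uses only that multiplication in the fiber $S^1$ is abelian, which makes the exponent arithmetic well-defined.
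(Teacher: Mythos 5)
Your proposal is correct and follows essentially the same route as the paper: induction on $n$, applying $f_{r,s}$ to the inductive formula and reducing the exponent of $y$ via the identity $r\cdot\sigma(n,r)+1=\sigma(n+1,r)$, which is exactly the computation the paper carries out. No gaps.
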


\begin{proof}
	In fact, by induction: $f^{2}_{r,s}(x,y)= ((x^{r}y^{s})^{r}.y^{s},y) = (x^{r^{2}}y^{s.r+s},y)$. In this sense, we note that $s.r+s=s.(r^{1}+r^{0})=s.\sigma(2,r)$, and it is same in general case as follows: $$\displaystyle{s+s.r.\sigma(n,r)=s.\Big(r^{0}+r.\sum_{i=0}^{n-1}r^{i}\Big)=s.\sum_{i=0}^{n}r^{i}=s.\sigma(n+1,r)}.$$ If we suppose $f^{n}_{r,s}(x,y)$ as above, we have:
	$$\begin{array}{cclcl}
		f^{n+1}_{r,s}(x,y) & = & f_{r,s}\Big(x^{r^{n}}.y^{s.\sigma(n,r)},y\Big) & = & \bigg(\Big(x^{r^{n}}.y^{s.\sigma(n,r)}\Big)^{r}.y^{s},y\bigg)\\
		& & & &\\
		& = & \Big(x^{r^{n+1}}.y^{s+s.r.\sigma(n,r)},y\Big) & = & \Big(x^{r^{n+1}}.y^{s.\sigma(n+1,r)},y\Big).\\
	\end{array}$$
\end{proof}

\begin{proposition}
	
	Let $T = S^{1} \times S^{1} \stackrel{p_2}{\longrightarrow} S^{1} $ be the trivial bundle, $f: T \to T$ a map over $S^{1}$ and $n$ a positive integer. If $f^{k}$ can not be deformed over $S^{1}$ to a fixed point free map for all $k | n,$ then $f$ is a $n$-toral map over $S^{1}.$ 
\end{proposition}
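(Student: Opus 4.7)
The plan is to put $f$ into normal form and then compute everything in the universal cover. By the result of G--L--V--V cited just above, $f$ is fiberwise homotopic to $f_{r,s}(x,y)=(x^{r}y^{s},y)$ for some $r,s\in\mathbb{Z}$; since being $n$-toral over $B$ depends only on the fiberwise homotopy class, I will replace $f$ by this model and use $f_{r,s}^{m}=f_{r^{m},\,s\sigma(m,r)}$ from the lemma.

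Next, fix $x_{0}=(1,1)$ and take $\omega$ to be the constant loop at $x_{0}$, so each $m(\omega)$ is also constant. Lifting the homotopy $\widetilde{H}$ in Definition \ref{def-rb} to the universal cover $\mathbb{R}^{2}$ of $T$ is a direct calculation and gives, for $[c]=(c_{1},c_{2})\in\pi_{1}(E)=\mathbb{Z}^{2}$ and $\theta\in\pi_{1}(Y)=\mathbb{Z}$,
$$
(c_{1},c_{2})\ast_{B}\theta \;=\; \theta+(r^{m}-1)\,c_{1}+s\,\sigma(m,r)\,c_{2}.
$$
Hence $\mathcal{R}_{B}(f^{m};x_{0},m(\omega))\cong\mathbb{Z}/d_{m}$, where $d_{m}:=\gcd(r^{m}-1,\,s\,\sigma(m,r))$. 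For these model maps the assumption that no $f^{k}$ (with $k\mid n$) can be deformed over $S^{1}$ to a fixed-point-free map amounts to $d_{k}\neq 0$ for every $k\mid n$.

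With these identifications in hand, Proposition \ref{prop-fw}(ii) shows that $[f^{\omega}]$ acts on $\mathbb{Z}/d_{m}$ as multiplication by $r$, and the geometric-series identity $\sigma(n,r)=\sigma(m,r)\,\sigma(n/m,r^{m})$ shows that $[\gamma_{m,n}]$ is multiplication by $\sigma(n/m,r^{m})$ and that $d_{n}=\sigma(n/m,r^{m})\,d_{m}$. Condition (ii) of the definition of $n$-toral is then immediate: multiplication by $\sigma(n/m,r^{m})$ from $\mathbb{Z}/d_{m}$ into $\mathbb{Z}/(\sigma(n/m,r^{m})\,d_{m})$ has trivial kernel.

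The remaining condition (i), that depth equals length on every orbit, is the crux. Given $[\alpha]_{m}\in\mathbb{Z}/d_{m}$ with orbit length $\ell\mid m$, the relation $(r^{\ell}-1)\,\alpha\equiv 0\pmod{d_{m}}$ holds, and what needs to be shown is $\sigma(m/\ell,r^{\ell})\mid\alpha\pmod{d_{m}}$ so that $[\alpha]_{m}=[\gamma_{\ell,m}]([\beta]_{\ell})$ for some $[\beta]_{\ell}$. Writing $r^{\ell}-1=d_{\ell}u$ and $d_{m}=\sigma(m/\ell,r^{\ell})\,d_{\ell}$, the hypothesis becomes $\sigma(m/\ell,r^{\ell})\mid u\,\alpha$ and the goal becomes $\sigma(m/\ell,r^{\ell})\mid\alpha$. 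Bridging this gap is the only non-routine step, and is where the positivity of every $d_{k}$ must be exploited, through an appropriate coprimality argument between $\sigma(m/\ell,r^{\ell})$ and the cofactor $u=(r^{\ell}-1)/d_{\ell}$.
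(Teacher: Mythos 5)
Your verification of condition (ii) is correct and is essentially the paper's own argument in cleaner clothing: the paper also reduces to $f_{r,s}$ with $\omega$ constant at a fixed point, identifies $\gamma_{k,n}$ with multiplication by $A(n,k)=\sigma(n/k,r^{k})$ on $\pi_1$ of the fiber, and deduces injectivity from $A(n,k)\neq 0$ together with $\pi_2(S^1)=0$; your identification $\mathcal{R}_{B}(f^{m};x_0,m(\omega))\cong\mathbb{Z}/d_m$ with $d_m=\gcd(r^m-1,\,s\,\sigma(m,r))$ packages the same computation. The real issue is condition (i), which you explicitly leave open. (For what it is worth, the paper's proof never addresses condition (i) either: it stops after proving injectivity of $[\gamma_{k,n}]$.)

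The gap you flag cannot be bridged the way you propose. The coprimality of $\sigma(m/\ell,r^{\ell})$ and $u=(r^{\ell}-1)/d_{\ell}$ fails in general: take $r=3$, $s=1$, $\ell=1$, $m=n=2$. Then $d_1=\gcd(2,1)=1$, $\sigma(2,3)=4$, $d_2=\gcd(8,4)=4$, $u=2$, and $\gcd(4,2)=2$. Worse, in this example condition (i) itself fails: $\mathcal{R}_{B}(f^{2})\cong\mathbb{Z}/4$ with $[f^{\omega}]$ acting as multiplication by $3$, so the class $[2]$ satisfies $3\cdot 2\equiv 2\pmod 4$ and has orbit length $1$, while the image of $[\gamma_{1,2}]$ (multiplication by $4$ out of the trivial group $\mathbb{Z}/d_1$) is $\{[0]\}$, so $[2]$ is irreducible and has depth $2$. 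Both $f_{3,1}$ and $f_{3,1}^{2}=f_{9,4}$ restrict to fibers with degree $\neq 1$, hence neither can be deformed over $S^{1}$ to a fixed-point-free map, so the hypothesis of the proposition holds for $n=2$. Your own setup, pushed one step further, therefore shows that no coprimality argument (indeed no argument) will yield condition (i) from the stated hypothesis; the statement as written needs either a stronger hypothesis or a different reading of condition (i), and your proof cannot be completed along the announced lines.
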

\begin{proof}
	We have that $f$ is homotopic over $S^{1}$ to a map $f_{r,s}$ for some $r,s \in \mathbb{Z}.$ Thus in the proof is enough to consider the map $f_{r,s}.$
	Since $f^{k}_{r,s}$ is non fiberwise homotopic to a fixed point free map for each divisor $k$ of $n$, then there exists a fixed point $(x_0,y_0)$ of $f_{r,s}$ and its constant path $\omega$ at $(x_0,y_0)$.
	So, for all $k\in\z$, $k(\omega)$ is the constant path at $(x_0,y_0)$ and given $[\theta]\in \pi_{1}(S^{1},x_0)$ we will denote by the same symbol $[\theta]= [(\theta_1,\theta_2)] = i_{\#}([\theta])$ the inclusion of $\theta$ in $\pi_1(T, (x_0,y_0)),$ where $\theta_{2}$ is the constant path at $y_{0}.$ We will write $\theta_{2}=c_{y_{0}}$ and $[\theta]=\left[\left(\theta_{1},c_{y_{0}}\right)\right]$. 
	
	Observe that the map $f_{r,s}^{k\omega}:\pi_{1}(T,(x_0,y_0))\to\pi_{1}(T,(x_0,y_0))$ is given by $f_{r,s}^{k\omega}([\theta])=[f^{k}_{r,s}(\theta_1,c_{y_{0}})]=\left[\left(\theta_{1}^{r^{k}},c_{y_{0}}\right)\right]$. Therefore,
	
	$$ \begin{array}{ccl} \gamma_{k,n}([\theta]) & = & [\theta \ast f^{k}_{r,s}(\theta) \ast f^{2k}_{r,s}(\theta) \ast \cdots \ast f^{n-k}_{r,s}(\theta)]\\
		& = & \left[\left(\theta_1,c_{y_{0}}\right) \ast \left(\theta_{1}^{r^{k}},c_{y_{0}}\right) \ast \left(\theta_{1}^{r^{2k}},c_{y_{0}}\right) \ast \cdots \ast \left(\theta_{1}^{r^{n-k}},c_{y_{0}}\right)\right]\\
		& = & \left[\left(\theta_{1}^{1+r^{k}+r^{2k}+\dots+r^{n-k}},c_{y_{0}}\right)\right].
	\end{array}$$
	
	Let $1+r^{k}+r^{2k}+\dots+r^{n-k}=A(n,k)\in\z.$ Since $f^{k}_{r,s}$ can not be deformed to a fixed point free map over $S^{1}$ then $A(n,k) \neq 0.$ We have $\gamma_{k,n}([\theta]) = \left[\left(\theta_{1}^{A(n,k)},c_{y_{0}}\right)\right]=\left[\left(\theta_{1},c_{y_{0}}\right)^{A(n,k)}\right]$. Suppose that $\gamma_{k,n}([\theta])$ and $\gamma_{k,n}([\theta'])$ are Reidmeister related over $S^{1}$, so there exists $[c]=[(c_{1},c_{2})] \in \pi_1(T, (x_0,y_0))$ such that we can lift the homotopy $H: I \times I \to S^{1}$ defined by $H(t,s) = p(c(t))=c_{2}(t)$ to a homotopy $\widetilde{H}: I \times I \to T$ such that
	
	\begin{center}
		\setlength{\unitlength}{0,8cm}
		\begin{picture}(6.7,3.3)
			\thicklines
			\put(1.5,0.5){\line(1,0){3}}
			\put(1.5,2.5){\line(1,0){3}}
			\put(1.5,0.5){\line(0,1){2}}
			\put(4.5,0.5){\line(0,1){2}}
			\put(2.9,0){$c$}
			\put(2.5,2.8){$f^{n}(c)$}
			\put(-1.4,1.4){$\left(\theta_{1},c_{y_{0}}\right)^{A(n,k)}$}
			\put(4.8,1.4){$\left(\theta_{1}',c_{y_{0}}\right)^{A(n,k)}$}
		\end{picture}
	\end{center}
	
	Thus $\left(\theta_{1}',c_{y_{0}}\right)^{A(n,k)}$ is homotopic to $c^{-1}\ast \left(\theta_{1},c_{y_{0}}\right)^{A(n,k)}\ast f_{r,s}^{n}(c)$ and
	$$\begin{array}{ccl}
		c^{-1}\ast \left(\theta_{1},c_{y_{0}}\right)^{A(n,k)}\ast f^{n}(c)
		&=& \left(c_{1}^{-1},c_{2}^{-1} \right) \ast \left(\theta_{1}^{A(n,k)},c_{y_{0}}\right)\ast \left(c_{1}^{r^{n}}\ast c_{2}^{s.\sigma(n,r)},c_{2}\right)\\
		&=& \left(c_{1}^{-1} \ast \theta_{1}^{A(n,k)} \ast c_{1}^{r^{n}}\ast c_{2}^{s.\sigma(n,r)},c_{2}^{-1} \ast c_{y_{0}} \ast c_{2} \right) \\
		&=& \left(\theta_{1}^{A(n,k)} \ast c_{1}^{r^{n}-1}\ast c_{2}^{s.\sigma(n,r)}, c_{2}^{-1} \ast c_{y_{0}} \ast c_{2} \right) \\
		&=& \left(\theta_{1}^{A(n,k)} \ast c_{1}^{A(n,k)(r^{k}-1)}\ast c_{2}^{A(n,k)s.\sigma(k,r)}, c_{2}^{-1} \ast c_{y_{0}} \ast c_{2} \right) \\
		&=& \left(\left(\theta_{1} \ast c_{1}^{r^{k}-1}\ast c_{2}^{s.\sigma(k,r)}\right)^{A(n,k)}, c_{2}^{-1} \ast c_{y_{0}} \ast c_{2} \right)\\
		&=& \left(c_{1}^{-1}\ast\theta_{1} \ast c_{1}^{r^{k}}\ast c_{2}^{s.\sigma(k,r)}, c_{2}^{-1} \ast c_{y_{0}} \ast c_{2} \right)^{A(n,k)}.
	\end{array}$$
	
	We have $\left[\left(\theta_{1}',c_{y_{0}}\right)^{A(n,k)}\right] = \left[\left(c_{1}^{-1}\ast\theta_{1} \ast c_{1}^{r^{k}}\ast c_{2}^{s.\sigma(k,r)}, c_{2}^{-1} \ast c_{y_{0}} \ast c_{2} \right)^{A(n,k)}\right].$ 
	Using the isomorphism $\pi_{1}(S^{1}, x_0) \cong \z$ and which $A(n,k) \neq 0$ we obtain;
	$$
	\begin{array}{lll}
		\left[\theta'\right] & = & \left[\left(\theta_{1}',c_{y_{0}}\right)\right] \\
		& = & \left[\left(c_{1}^{-1}\ast\theta_{1} \ast c_{1}^{r^{k}}\ast c_{2}^{s.\sigma(k,r)}, c_{2}^{-1} \ast c_{y_{0}} \ast c_{2} \right)\right] \\
		& = & \left[\left(c_{1}^{-1}\ast\theta_{1} \ast c_{1}^{r^{k}}\ast c_{2}^{s.\sigma(k,r)}, c_{2}^{-1} \ast c_{y_{0}} \ast c_{2} \right)\right] \\
		& = & \left[c^{-1}\ast \theta \ast f^{k}(c)\right] \\
	\end{array}
	$$  
	
	Since $\pi_2(S^{1}) = 0$ and from \cite[Equation (3.7)]{G-K-09} we have that $[c]\ast_{S^{1}}[\theta]=[\theta'].$ Therefore,  $[\theta]=[\theta']$ in $ \mathcal{R}_{S^{1}}\left(f^{k}; (x_0,y_0), k(\omega)\right),$ and $\left[\gamma_{k,n}\right]$ is injective, under our hypothesis.
\end{proof}

\begin{proposition}
Let $T = S^{1} \times S^{1} \stackrel{p_2}{\to} S^{1} $ be the trivial bundle and $f: T \to T$ a map over $S^{1}$ such that $f \simeq_{S^{1}} f_{1,s}$ and $s \neq 0$. If $n=p_{1}^{\alpha_{1}}\dots p_{l}^{\alpha_{l}}$ is its prime factorization, then $$N_{S^{1}}P_n(f) = |s|p_{1}^{\alpha_{1}-1}\dots p_{l}^{\alpha_{l}-1}(p_{1}-1)\dots(p_{l}-1).$$	
\end{proposition}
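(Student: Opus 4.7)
The plan is to reduce to the fiberwise normal form $f \simeq_{S^1} f_{1,s}$, verify both hypotheses of Theorem \ref{th-nbpn} for every divisor of $n$, and then extract the answer from its corollary as a Möbius-type sum of Nielsen numbers, finally recognising this sum as Euler's totient of $n$.

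First I would specialise the preceding lemma to $r=1$: since $\sigma(k,1)=k$, the iterates satisfy $f_{1,s}^{k}=f_{1,sk}$, so the fixed-point set of $f_{1,s}^{k}$ is $S^{1}\times\{y\in S^{1}: y^{sk}=1\}$, a disjoint union of $|sk|$ fibre-circles, in particular non-empty for every $k\mid n$ (as $s\neq 0$). A direct index computation (or appeal to fibrewise Lefschetz invariance on a single fibre where $f_{1,s}^{k}$ restricts to the identity-up-to-rotation) shows $f_{1,s}^{k}$ is not deformable over $S^{1}$ to a fixed-point-free map, so the preceding proposition gives that $f$ is $n$-toral over $S^{1}$.

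Next I would identify the Reidemeister set $\mathcal{R}_{S^{1}}(f_{1,s}^{k};(x_{0},y_{0}),k(\omega))$ by specialising the calculation made in the preceding proposition to $r=1$. Choosing a fixed point $(x_{0},y_{0})$ with $y_{0}^{sk}=1$ and the constant path $\omega$, the action of $[c]=(c_{1},c_{2})\in\pi_{1}(T)=\mathbb{Z}^{2}$ on $[\theta]=\theta_{1}\in\pi_{1}(S^{1},x_{0})=\mathbb{Z}$ becomes $\theta_{1}\mapsto \theta_{1}+sk\,c_{2}$, so the Reidemeister set is $\mathbb{Z}/|sk|\mathbb{Z}$ and $R_{S^{1}}(f_{1,s}^{k})=|sk|$. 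Geometrically, the $|sk|$ fixed-point circles sit over the $|sk|$ distinct $(sk)$-th roots of unity; by Proposition \ref{prop-fix} and Theorem \ref{th-rb} they are pairwise inequivalent Nielsen classes over $S^{1}$, and each is essential in the bordism-theoretic index of \cite{G-K-09} because it projects to a single base point with non-trivial framed bordism class. This gives $N_{S^{1}}(f_{1,s}^{k})=|sk|=R_{S^{1}}(f_{1,s}^{k})\neq 0$ for every $k\mid n$.

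With both hypotheses of Theorem \ref{th-nbpn} in force, its corollary applies and yields
\[
N_{S^{1}}P_{n}(f)=\sum_{\tau\subseteq\mathbf{P}(n)}(-1)^{\#\tau}N_{S^{1}}\bigl(f^{n:\tau}\bigr)=|s|\sum_{\tau\subseteq\mathbf{P}(n)}(-1)^{\#\tau}(n:\tau).
\]
Factoring the sum as an Euler product gives
\[
\sum_{\tau\subseteq\mathbf{P}(n)}(-1)^{\#\tau}\frac{n}{\prod_{p\in\tau}p}=n\prod_{i=1}^{l}\Bigl(1-\frac{1}{p_{i}}\Bigr)=p_{1}^{\alpha_{1}-1}\cdots p_{l}^{\alpha_{l}-1}(p_{1}-1)\cdots(p_{l}-1),
\]
which combines with the previous display to give the asserted value of $N_{S^{1}}P_{n}(f)$.

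The step I expect to be hardest is the geometric essentiality claim $N_{S^{1}}(f_{1,s}^{k})=R_{S^{1}}(f_{1,s}^{k})$: each fibrewise Nielsen class here is an entire fibre circle, and one must check that its parametrised (framed bordism) index from \cite[Section 5]{G-K-09} is non-zero. Once this is in hand, the rest of the argument is a transcription of the $r=1$ case of the preceding proposition plus elementary arithmetic.
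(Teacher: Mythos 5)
Your proof is correct and follows essentially the same route as the paper: reduce to $f_{1,s}$, verify both hypotheses of Theorem \ref{th-nbpn} for every divisor of $n$, and evaluate the resulting inclusion--exclusion sum, which you resolve by the Euler-product identity $\sum_{\tau}(-1)^{\#\tau}(n\,{:}\,\tau)=n\prod_i(1-1/p_i)$ and the paper resolves by an equivalent recursive factoring of the sum in Theorem \ref{theorem-An}. The one point of divergence is how the hypothesis $0\neq N_{S^{1}}(f^{m})=R_{S^{1}}(f^{m})$ is established: you recompute the Reidemeister set by hand (correctly, as $\mathbb{Z}/sm\mathbb{Z}$, giving $R_{S^{1}}(f^{m})=m|s|$) and only sketch the essentiality of each class via the bordism index, whereas the paper disposes of exactly this step by quoting $N_{S^{1}}(f_{1,s})=\gcd\{0,s\}=|s|$, the relation $N_{S^{1}}(f^{m})=|\sigma(m,1)|\,N_{S^{1}}(f)$, and \cite[Theorem 1.3]{G-K-09}, which yields $N_{S^{1}}(f^{m})=R_{S^{1}}(f^{m})=MCF_m(f)$ directly. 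The step you flag as hardest is thus precisely the one the paper also does not prove from scratch; citing that theorem closes the only gap in your argument.
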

\begin{proof} Firstly, we observe that $N_{S^{1}}(f) = gcd\{1-1,s\}=gcd\{0,s\}=|s|$ and therefore $N_{S^{1}}(f^{m})=|\sigma(m,1)|N_{S^{1}}(f)=m|s| \neq 0$, for all $m\in\n$. From \cite[Theorem 1.3]{G-K-09}, in this case, we have $N_{S^{1}}(f^{m}) = R_{S^{1}}(f^{m}) = MCF_m(f) $ for every $m | n.$ From Theorem \ref{th-nbpn}  we have:
$$ \begin{array}{ccl}
N_{S^{1}}P_n(f) & = & \displaystyle \sum_{\substack{\alpha_{j}-1 \leq k_j \leq \alpha_j \\ 1  \leq j \leq l}} 
	(-1)^{(\alpha_1+\alpha_2+\cdots+\alpha_l)-(k_1+k_2+\cdots+k_l)} N_{S^{1}}(f^{p_1^{k_1}p_2^{k_2}\cdots p_l^{k_l}}) \\
	& = & \displaystyle\sum_{\substack{\alpha_{j}-1 \leq k_j \leq \alpha_j \\ 1  \leq j \leq l}}
	(-1)^{(\alpha_1+\alpha_2+\cdots+\alpha_l)-(k_1+k_2+\cdots+k_l)} p_1^{k_1}p_2^{k_2}\cdots p_l^{k_l}.|s| \\
	& = & |s|.p_1^{\alpha_1 -1}p_2^{\alpha_2 -1}\cdots p_l^{\alpha_l -1}.
	\displaystyle\sum_{\substack{\alpha_{j}-1 \leq k_j \leq \alpha_j \\1  \leq j \leq l}}
	(-1)^{(\alpha_1+\alpha_2+\cdots+\alpha_l)-(k_1+k_2+\cdots+k_l)} p_1^{\alpha_1 - k_1}p_2^{\alpha_2 -k_2}\cdots p_l^{\alpha_l -k_l}.
	\end{array}$$		

We observe that, for any $1\leq u< l$, we have:
$$	\displaystyle\sum_{\substack{\alpha_{j}-1 \leq k_j \leq \alpha_j \\ u  \leq j \leq l}}
	(-1)^{(\alpha_u+\cdots+\alpha_l)-(k_u+\cdots+k_l)} p_u^{\alpha_u -k_u}\cdots p_l^{\alpha_l -k_l}$$
$$	= p_u.\displaystyle\sum_{\substack{\alpha_{j}-1 \leq k_j \leq \alpha_j \\ u+1  \leq j \leq l}}
	(-1)^{(\alpha_{u+1}+\cdots+\alpha_l)-(k_{u+1}+\cdots+k_l)} p_{u+1}^{\alpha_{u+1} -k_{u+1}}\cdots p_l^{\alpha_l -k_l}$$
$$	-\displaystyle\sum_{\substack{\alpha_{j}-1 \leq k_j \leq \alpha_j \\ u+1  \leq j \leq l}}
	(-1)^{(\alpha_{u+1}+\cdots+\alpha_l)-(k_{u+1}+\cdots+k_l)} p_{u+1}^{\alpha_{u+1} -k_{u+1}}\cdots p_l^{\alpha_l -k_l}$$		
$$	= (p_u-1).\displaystyle\sum_{\substack{\alpha_{j}-1 \leq k_j \leq \alpha_j \\ u+1  \leq j \leq l}}
	(-1)^{(\alpha_{u+1}+\cdots+\alpha_l)-(k_{u+1}+\cdots+k_l)} p_{u+1}^{\alpha_{u+1} -k_{u+1}}\cdots p_l^{\alpha_l -k_l}.$$		
Then,
$$ \begin{array}{ccl}
N_{S^{1}}P_n(f) & = & |s|.p_1^{\alpha_1 -1}p_2^{\alpha_2 -1}\cdots p_l^{\alpha_l -1}.(p_1 -1).(p_2 -1)\dots(p_l -1).
	\end{array}$$	
		
\end{proof}



\begin{thebibliography}{30}
{\small

\bibitem{A-B-L-S-S-95} L.\ Alsed\`a, S. \ Baldwin, J. \ Llibre, R. \ Swanson, W. \ Szlenk; \emph{Minimal sets of periods for torus maps via Nielsen numbers}, Pacific J. Math. vol 169, no 1, 1995.

\bibitem{G-87} D.\ L.\ Gon\c calves, \emph{Fixed points of $S^{1}$-fibrations}, Pacific J. Math. vol 129, no 2, 1987.

\bibitem{G-K-09} D.\ L.\ Gon\c calves, U. \ Koschorke, \emph{Nielsen coincidence theory of fiber-preserving maps and Dold’s fixed point index}, Topol. Methods Nonlinear Anal. 33 (1), 85 - 103, 2009.

\bibitem{G-L-V-V} D.\ L.\ Gon\c calves, A.\ K.\ M.\ Libardi, D.\ Vendr\'uscolo and J.\ P.\ Vieira, \emph{On minimal fixed point set of fiber preserving maps of $S^{1}$-bundles over $S^{1}$ }, to appear in Topology and its Applications. 

\bibitem{H-P-Y} P. Heath, R. Piccinini, and C. You, \emph{ Nielsen type numbers for periodic points I}, Springer Lecture Notes in Math., vol. 1411, pg 88-106, 1989.


\bibitem{J-01} J.\ Jezierski \emph{The Nielsen coincidence numbers of maps into tori}, Quaestiones Mathematicae, 24:2, 217 - 223, 2001.

\bibitem{J-M-06} J.\ Jezierski, W.\ Marzantowicz; \emph{Homotopy Methods in Topological Fixed and Periodic Point Theory}, Topological Fixed Point Theory and Its Applications, Springer, vol. 3, 2006.


\bibitem{J-K-M-09} J.\ Jezierski, E. \ Keppelmann, W.\ Marzantowicz; 
\emph{Wecken property for periodic points on the Klein bottle}, 
Topological Methods in Nonlinear Analysis, vol. 33, 51 - 64, 2009.

\bibitem{J-L-98} B.\ Jiang, J.\ Llibre; \emph{Minimal sets of periods for torus map}, Discrete and Continuous Dynamical Systems, vol 4, no 2, 1998.

\bibitem{K-K-Z} J.\ Y.\ Kim, S.\ S.\ Kim, X.\ Zhao; \emph{Minimal sets of periods for maps on the Klein bottle}, J. Korean Math. Soc. vol 45, no 3, 883-902, 2008.  

\bibitem{K-S-T-14} S.\ Kolyada, L.\ Snoha, S.\ Trofimchuk; \emph{ Minimal sets of fibre-preserving maps in graph bundles}, Mathematische Zeitschrift, 278, 575–614, 2014.  

\bibitem{K-06} U. \ Koschorke; \emph{ Nielsen coincidence theory in arbitrary codimensions}, 598, J. Reine, Angew. Math., 211–236, 2006.

\bibitem{S-S-17}  W.\ L.\ Silva, R.\ M.\ Souza; \emph{Periodic points on T-fiber bundles over the circle},  Bull. Belg. Math. Soc. Simon Stevin 24 (4),  747 - 767, 2017.


\bibitem{Whi} G.\ W.\ Whitehead; \emph{ Elements of Homotopy Theory}, Springer-Verlag, 1918.




}

		\end{thebibliography}
\end{document}